\definecolor{orcidlogocol}{HTML}{A6CE39}
\newcommand\norm[1]{\left\lVert#1\right\rVert}
\begin{document}
\title{Fairness in Multi-Proposer-Multi-Responder Ultimatum Game}

\author{\orcidlink{0000-0002-0062-3377}Hana Krakovsk\'{a}}
\email[Corresponding author:]{hana.krakovska@meduniwien.ac.at}
\affiliation{Institute of the Science of Complex Systems, Center for Medical Data Science, Medical University of Vienna, Spitalgasse 23, Vienna, 1090, Austria}
\affiliation{Complexity Science Hub, Josefstädter Str. 39, Vienna, 1080, Austria}
\author{\orcidlink{0000-0002-4045-9532}Rudolf Hanel} 
\affiliation{Institute of the Science of Complex Systems, Center for Medical Data Science, Medical University of Vienna, Spitalgasse 23, Vienna, 1090, Austria}
\affiliation{Complexity Science Hub, Josefstädter Str. 39, Vienna, 1080, Austria}
\author{\orcidlink{0000-0002-1698-5495}Mark Broom}
\affiliation{Department of Mathematics, City, University of London, Northampton Square, London, EC1V 0HB, United Kingdom}
 
\date{Version \today}
\keywords{Ultimatum Game $|$ Fairness $|$ Multiplayer Games $|$ Competition}

\begin{abstract}
\noindent The Ultimatum Game is conventionally formulated in the context of two players. Nonetheless, real-life scenarios often entail community interactions among numerous individuals. To address this, we introduce an extended version of the Ultimatum Game, called the Multi-Proposer-Multi-Responder Ultimatum Game. In this model, multiple responders and proposers simultaneously interact in a one-shot game, introducing competition both within proposers and within responders. We derive subgame-perfect Nash equilibria for all scenarios and explore how these non-trivial values might provide insight into proposal and rejection behavior experimentally observed in the context of one vs.~one Ultimatum Game scenarios. Additionally, by considering the asymptotic numbers of players, we propose two potential estimates for a "fair" threshold: either 31.8\% or 36.8\% of the pie (share) for the responder. 
\end{abstract}

\maketitle
\section*{Introduction}
The Ultimatum Game (UG) has been one of the paradigmatic games for studying fairness since its introduction in 1982 by G\"{u}th et al.~\cite{guth1982}. This simple one-shot game consists of a reward and two players with asymmetric roles: proposer and responder. The proposer's task is to suggest a split of the reward between the players. The responder can then accept or reject this split. On acceptance, the reward is split accordingly, and on rejection, both players receive nothing. The theoretical prediction states that a rational self-interested proposer will offer the minimum amount they believe the responder will accept. Similarly, a rational self-interested responder will accept any positive offer and remain indifferent between accepting or rejecting a zero offer. Thus, the subgame-perfect Nash equilibrium~\cite{gintis2000}, dictates the proposer to offer the smallest possible unit of the reward 
which the responder then accepts. 

However, in experiments, real-world players do not play as predicted by this theory (see e.g.~Camerer~\cite{camerer2011} and Henrich et al.~\cite{henrich2005}).
In Western, educated, industrialised, rich, democratic (W.E.I.R.D) societies, responders reject offers of less than 20$\%$ of the reward with a probability of one-half and almost always accept proposals of 40$\%$ to 50$\%.$ Proposers' modal offers are usually between $40\%$ to $50\%$ and mean offers between $30\%$ to $40\%$~\cite{camerer2011}. For example, Oosterbeek et al.~\cite{oosterbeek2004} found in their comprehensive review a mean offer of $40\%.$ On the other hand, experimental results on UG-like scenarios (see again~\cite{henrich2005}), played in various small-scale societies, report more variable proposer and responder thresholds, ranging from greedy to generous. 
Interestingly, among the most generous proposers are tribal whaler societies~\cite{alvard2004}, whose lifestyle requires high levels of cooperation and mutual trust. Elsewhere, it is common for low offers to be both proposed and accepted~\cite{patton2004,henrich2004}. 

Additionally, the sensitivity to biases, including anonymity, stake level, and context effects have been studied, and it was concluded they do not seem to shift played thresholds significantly~\cite{larney2019,charness2008,hoffman2000}. 
Moreover, \textit{priming} players, e.g.~by conjuring an imaginary {\rm right} to play proposer, has been found to make players propose more greedily (see Hoffman et al.~\cite{hoffman1994}), although some of the results have been recently contested by Demiral and Mollerstrom~\cite{demiral2020}.
A concise understanding of how human sharing and cooperation behavior precisely depends on context and circumstances (including the complexity of social interactions in terms of multi-player scenarios) is, therefore, still partly to be found.

Whether \textit{fairness} is part of our ancestral behavioural makeup that evolved among apes or can be attributed only to the "modern" self-domesticated human~\cite{wilkins2014,stetka2021} is still debated~\cite{jensen2007,milinski2013,kaiser2012,proctor2013}.

W.E.I.R.D. societies have been shown to follow a unique cognitive trajectory, concerning sharing and cooperation, which can be traced back to a change in inheritance law in 305 AD. It was introduced by the Catholic Church due to its preoccupation with incest, promoting heritage by testament, banning polygamous marriages and marriages to relatives, promoting the newlywed to set up independent households~\cite{henrich2010, schulz2018}. 

This reform, over a period of more than a thousand years, broke up clan structures and thereby fostered individuality over clan-identity, and the interaction and cooperation of individuals across clan-boundaries, which may explain some peculiarities of the W.E.I.R.D. world. In summary, our evolutionary and societal development remains partly veiled in the dusk of the past and can only be accessible through comparative studies (e.g.~\cite{wilkins2014}) and to a \textit{hypothetical realism} that largely remains grounded in mathematical modeling.

Evolutionary game theory is a common framework for studying the emergence of \textit{fairness} (see e.g.~Debove et al.~\cite{debove2015} who review various evolutionary game-theoretic models of the UG type). For instance, some models on networks indicate a dependence of the proposer and responder thresholds on the topology of the social network (see e.g.~Sinatra et al.~\cite{sinatra2009}, Kuperman and Risau-Gusman~\cite{kuperman2008} and Page et al.~\cite{page2000});  others explore the role "social status", may play \cite{harris2020}. Alternatively, Fehr and Schmidt~\cite{fehr1999} propose an inequity-aversion framework in their seminal work. Also, see the review by G\"{u}th and Kocher~\cite{guth2014}.

Regarding generalisations of the UG to multiple players, the majority of theoretical and experimental extensions typically focus on scenarios with either one proposer and many responders or vice versa. For instance, Roth et al.~\cite{roth1991} conducted experiments with proposer competition and one responder, while Fischbacher et al.~\cite{fischbacher2009} explored responder competition. They showed that introducing competition raises the offers in the former and decreases offers in the latter as compared to the one vs.~one UG.  

Additionally, Santos et al.~\cite{santos2015} investigated an evolutionary game-theoretic model with a group decision-making process for responders when faced with an offer from a single proposer. In another paper, Santos and Bloembergen~\cite{santos2019} extend the UG to a group of proposers playing with a group of responders. The group of responders rejects the average proposed offer if it is lower than their average group acceptance threshold. In generosity or envy games there is an additional third dummy player that takes no active role (for details see survey from G\"{u}th and Kocher~\cite{guth2014}). 

In this paper, we introduce an extension of the UG where multiple proposers and multiple responders engage in a one-shot interaction. This extended model seems to be more appropriate in various social contexts, such as the tribal whaler society mentioned above than the classical two-player paradigm. It constitutes a two-sided market, where proposers simultaneously make their offers, and responders, in turn, simultaneously each select an offer from one of the proposers (or choose a number with some probability), or decline all offers.
If a proposer is chosen by at least one responder, they receive the proposed split. Conversely, if no responders select them, they receive nothing. On the other hand, all responders who chose a proposer that was not selected by anyone else will receive their share. In cases where multiple responders choose the same proposer, only one of them, randomly selected, receives the proposed split, while the other responders receive nothing. 

In the context of a simplified labor market, proposers represent potential employers and responders potential employees. We assume that the employers offer identical roles, such as a plumbing job, where the reward is the total revenue, and they must decide how to split it with the employee. If an employer decides to claim too large a share of the revenue, they run the risk of being outbid by other employers and earning nothing. Similarly, if the employees would simply choose the highest offer, they may end up making the same choice as many other responders, resulting in no job opportunity if they are not selected by the probabilistic rule. An essential factor influencing the "optimal" offers of employers is the balance between the number of available jobs (employers) and the number of potential employees. Alternatively, within a biological context, proposers may symbolise plants, offering their energy in the form of nectar, while responders represent the pollinators.
  
A similar concept to the multiplayer UG is sequential bargaining (see e.g.~Rubinstein and Wolinsky~\cite{rubinstein1985}, Li et al.~\cite{li2022}). In these models, the proposer role is passed on around players until everyone agrees with the division. In Li et al.~\cite{li2022} they describe a model with two buyers and two sellers. Sequentially, each participant selects a partner from the opposing group and initiates bargaining by making an offer. If the initial pair reaches an agreement, they quit bargaining and leave with their share. However, in the case of rejection, they remain open to being chosen by another seller or buyer for further bargaining. If not all players agree on a share, in the subsequent round the other group takes the role of the proposer and makes the offers. However, these models differ due to their sequential quality and role switching, whereas in our model we present a one-shot model.
 
This paper is organised as follows. In the following section,
we present a formalisation of the game and 
(for a self-contained description) briefly introduce the subgame-perfect Nash equilibria of the one vs.~one and one vs.~many UG scenarios. 
Then, in \hyperref[sec:2P2R]{Two Proposers and Two Responders}
we provide a detailed analysis of the game involving two proposers and two responders, where we find the subgame-perfect Nash equilibrium of the game. Subsequently, in~\hyperref[sec:generalPR]{$K$ Proposers and $L$ Responders}, 
we derive the solution for the general case of multiple proposers facing multiple responders. 
In the~\hyperref[sec:discussion]{Discussion} we scrutinise the discovered solutions, look closer at their asymptotic properties, and discuss some implications for fairness in human behaviour. Lastly, we summarise our results in the~\hyperref[sec:conclusions]{Conclusions}.
\section*{The Model}
\label{sec:model}
In the following we 
extend the framework of the classical UG to a \textit{Multi-Proposer-Multi-Responder} (MPMR) framework and, at the same time, comment on known results of three special cases: one responder vs.~one proposer, many responders vs.~one proposer, and one responder vs.~many proposers.

The MPMR UG game involves $L \in \mathbb{N}$ responders and $K \in \mathbb{N}$ proposers. Each proposer is endowed with a potential reward of size one that is to be split with one of the responders. The game has two stages. In the first stage, each proposer puts forward a split of the reward denoted as $s_i \in [0,1]$, where ${i \in \{ 1,2,\dots,K\}}$, offering $s_i$ to the responders and ${1-s_i}$ to themselves. In the second stage of the game, responders simultaneously and independently (without the knowledge of other responders' choices) select one of the proposers or select no one. Additionally, they may also use mixed strategies and probabilistically decide between multiple proposers. Responders have full information about all the proposed offers and are impartial towards the proposers themselves. Following the decision of the responders, the payoffs are distributed. If the proposer $i$ was selected by at least one responder, they receive {$1-s_i$}, otherwise, they receive nothing. Similarly, only one responder picked randomly (with probability one divided by the number of individuals that chose the same proposer) among the respective responders receives the offered split $s_i$, the others receive nothing, just as responders who did not choose any proposer would. 

Let us rewrite the payoffs in symbolic terms. Denote $N_{R,i} \in \{0,1,\dots, L\},$ $ i \in \{1,2, \dots, K\}$ as the number of responders that chose the proposer $i$. The payoff $\pi_{P,i}$ of proposer $i$ with offer $s_i \in [0,1]$ is calculated as:
\begin{equation*}
\pi_{P,i}=
\begin{cases}
\begin{aligned}
&1-s_i &\quad \text{if } &N_{R,i}\geq 1\ , \\
&0 &\quad \text{if } &N_{R,i}=0\ . 
\end{aligned}
\end{cases}
\end{equation*}
The payoff $\pi_{R,j}$ of the responder $j, j \in \{1,2,\dots,L\}$ that chose proposer $i$ is given as
\begin{equation*}
\pi_{R,j}=
\begin{cases}
\begin{aligned}
&s_i &\quad   &\text{with probability } \frac{1}{N_{R,i}}\ , \\
&0 &\quad &\text{with probability } \frac{N_{R,i}-1}{N_{R,i}}\ . 
\end{aligned}
\end{cases}
\end{equation*}
The payoff $\pi_{R,j}$ of the responder $j$ who rejected all offers is given as:
\begin{equation*}
\pi_{R,j}=0 \ .
\end{equation*}
 In Fig.~\ref{fig:MRMPUG_graphics} we illustrate the mechanics of the MPMR UG in a simple scenario.
\begin{figure*}
    \centering
    \includegraphics[scale=0.87]{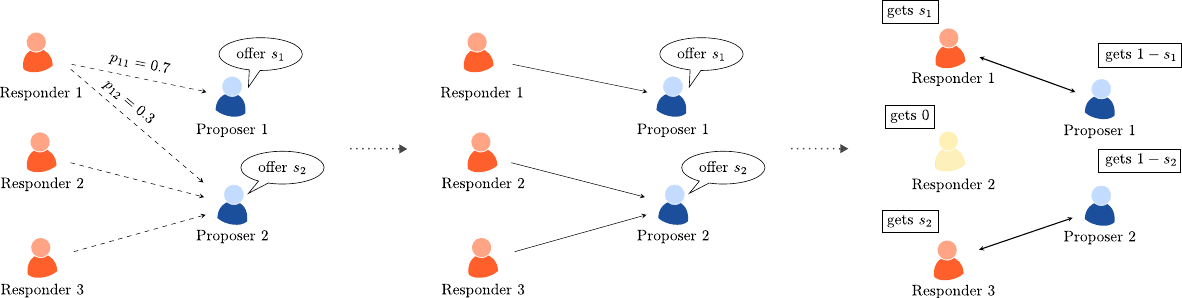} \caption{Graphical representation of the game for the case of $L=3$ responders and $K=2$ proposers. In the first stage, proposers announce their offers, prompting each responder to determine their selection strategy. In this scenario, responder $1$ chooses a mixed strategy while the other two responders play pure strategies. In the second stage, it is probabilistically decided that responder $1$ chooses proposer $1$. Since two responders chose proposer $2,$ another probabilistic realisation determines who gets paired with the proposer. In this case, responder $2$ is unpaired, resulting in a zero payoff. Similarly, if one of the proposers (or both) would not be selected they would receive a zero payoff.}
  \label{fig:MRMPUG_graphics}
\end{figure*}
Before going into further analysis let us first summarise the known results for three basic cases of the game. With one proposer and one responder, the game reduces to a classical UG. If the responder's strategy is to refuse any offer other than $s, s \in [0,1]$ then the proposer offering $s$ corresponds to a Nash equilibrium. Likewise, it constitutes a Nash equilibrium if we presume that once a responder accepts $s$, they will accept any offer $s^*$ greater than $s.$ Consequently, there exists a continuum of Nash equilibria in the UG. To reduce their number, attention can be directed towards the concept of a subgame-perfect Nash equilibrium~\cite{gintis2000}. If the proposer deviates from their strategy and offers slightly less than $s$, in this subgame, the responder's best-reply is to accept the offer. Consequently, no $s > 0$ can be a subgame-perfect equilibrium, since the proposer can always improve their payoff by lowering the offer. Thus, there exists a unique subgame-perfect equilibrium where the proposer offers ${s = 0}$ and the responder accepts all offers. If there exists a grid of possible offers instead of a continuous interval, the second smallest possible offer $\epsilon > 0$ is also a subgame-perfect equilibrium. This is because the responder maximises their payoff regardless of accepting or rejecting the offer of $s = 0$ and may choose to reject a zero offer, in which case the proposer's subgame-perfect equilibrium is to offer $\epsilon.$

Similarly, in the case of multiple responders facing a single proposer, in the subgame-perfect equilibrium responders are offered zero and at least one of them accepts the offer. Once again, in the case of the existence of the second smallest offer, this is a subgame-perfect equilibrium too if all responders reject the zero offer. In the case of proposer competition, where multiple proposers face a single responder, in the subgame-perfect equilibrium at least two proposers offer $s=1$ which the responder accepts (note that the other proposers can offer any split, which subsequently leads to multiple equilibria). For details see e.g.~\cite{fehr1999}.

In the next sections, we will see that the introduction of two-sided competition yields intriguing outcomes. Since only one responder gets their share of the reward if more than one of them select the same proposer, it may not always be optimal for responders to straightforwardly select the best proposal, as others might employ the same approach. Instead, a more efficient strategy might involve granting a non--zero probability of going to the second-best proposal and other alternative offers. In this sense, the two-sided competition problem introduces a minority-game-like situation (as in the El Farol Bar problem~\cite{arthur1994}) into the MPMR UG scenario. Consequently, the proposers are motivated to make offers below the offer of one, as there is a possibility that the second-best offer might still be accepted by some responder. On the other hand, there is still the presence of competition between the proposers which prevents them from offering zero. 

In the next section, we start with the scenario involving two proposers and two responders.
\section*{Two Proposers and Two Responders}\label{sec:2P2R}
In this section, we provide an analysis of the MPMR UG with two proposers and two responders. Our goal is to find subgame-perfect Nash equilibria. We will see that in each subgame with at least one positive offer there are either one or two Nash equilibria for the responders, depending on the combination of offers. Only one of them is also an evolutionarily stable strategy (ESS) and when restricting ourselves to that strategy there is a unique subgame-perfect Nash equilibrium where both proposers offer $s=0.5$ and both responders select each of them with probability $0.5.$
\subsection*{Responders' Nash Equilibrium}
Let us start with denoting the strategy of one particular subgame (i.e.~how much is offered to the responders) for the first proposer as $s_1 \in [0,1]$ and for the second proposer as $s_2\in [0,1].$  Without loss of generality we assume $s_1\leq s_2.$ The mixed strategies of the respective responders (denoted $R_1$ and $R_2$) selecting the respective proposers (denoted $P_1$ and $P_2$) will be given as:
\begin{equation*}
    \begin{aligned}
P(R_1 \text{ choosing } P_1)&=p,  \
P(R_1 \text{ choosing } P_2)&=1-p,\\
P(R_2 \text{ choosing } P_1)&=q, \
P(R_2 \text{ choosing } P_2)&=1-q,\\ 
    \end{aligned}
\end{equation*}
where $p,q \in [0,1]$ (note neither of the responders rejects any offer since this simplification does not prevent us from finding the best reply strategies).
Considering these strategies we can calculate the expected payoffs of responders, denoted by $\Pi_{R,1},\Pi_{R,2} $ for the first and second responder respectively:
\begin{equation*}
\label{eq:payoffs_R1R2}
\begin{aligned}
\Pi_{R,1}(p,q)&=s_1p\left(1-q+\frac{q}{2}\right)+s_2(1-p)\left(q+\frac{1-q}{2}\right),  \\
\Pi_{R,2}(p,q)&=\Pi_{R,1}(q,p) \ .
\end{aligned}
\end{equation*} 
Next, we want to find the best response of the first responder given a set of offers $(s_1,s_2)$ and a fixed mixed strategy $(q,1-q)$ of the second responder. 
We start by finding $p$ that maximises $\Pi_{R,1}.$ Notice the payoff  $\Pi_{R,1}$ is linear in $p.$ Thus, the derivative of $\Pi_{R,1}$ with respect to $p$ is constant for all $p$ and given as:
\begin{equation}
\label{eq:derivativeR1}
\frac{\partial \Pi_{R,1}}{\partial p}=s_1 - \frac{s_2 + qs_1 + qs_2}{2}.
\end{equation}
It is easy to find the best response strategy of the first responder, denoted by $p^*:$   
\begin{equation}
p^*=
    \begin{cases}
    \begin{aligned}
      &[0,1] \quad &\text{if } s_1&=s_2=0,\\
       &[0,1] \quad &\text{if } q&=q_{\rm{crit}}\equiv \frac{2s_1-s_2}{s_1+s_2},\\
        &1 \quad &\text{if } q&<q_{\rm{crit}},\\
       &0 \quad &\text{if } q&>q_{\rm{crit}}.\\       
        \end{aligned}
    \end{cases}
    \label{eq:opt_p}
\end{equation}
We would get mirror results for the best response strategy of the second responder since the responder roles are symmetric.
It is evident from~\eqref{eq:opt_p}, that when the higher offer $s_2$ exceeds twice the value of $s_1$, it is advantageous for both responders to disregard the lower offer $s_1.$ Conversely, in instances where the offers are sufficiently similar ($s_2 \leq 2s_1$), a critical threshold $q_{\rm{crit}}$ emerges. If the second responder plays the threshold strategy $q_{\rm{crit}}$, the first responder's strategy becomes irrelevant. If the responder plays a different strategy to $q_{crit},$ the best response is to choose exclusively the other relatively less occupied proposer with respect to the threshold. As a consequence, the Nash equilibrium for responders in the subgame with fixed offers $s_1 \leq s_2$ is:
\begin{equation}
\begin{aligned}
&\textit{Sc. 0:}&\text{if } &s_1=s_2=0  &p_0,q_0 \in [0,1] \text { or reject.}  \\
&\textit{Sc. A:}&\text{if } &s_2<2s_1  &p_A=q_A=\frac{2s_1-s_2}{s_1+s_2}.  \\ 
&\textit{Sc. B:}&\text{if } &s_2<2s_1  &p_B=0, \hspace{0.5em}q_B=1 \text{ or reversed.}\\
&\textit{Sc. C:}&\text{if } &s_2=2s_1\neq 0   &p_{C}=0,  q_{C} \in [0,1]  \text{ or reversed.}\\
&\textit{Sc. D:}&\text{if } &s_2>2s_1  &p_{D}=q_{D}=0. \\
\end{aligned}\label{eq:scenarios}
\end{equation} 
In \textit{Scenario 0} both proposers offer zero and responders receive a zero payoff, no matter which strategy they choose. In \textit{Scenario D} one of the offers is more than double the amount proposed by the other proposer. Thus, both responders opt for the proposer with the higher offer and discard the other. In \textit{Scenario C}, there is a continuum of Nash equilibria, however, playing the strategy of going to the second proposer with offer $s_2$ is a weakly dominant strategy (this is easy to see, since the payoff is given as $\Pi_{R,1}=s_1( 1+  q-\frac{3}{2} p q)$). 
Notice that when the ratio of the offers is less than two ($s_2<2s_1$),  two types of Nash equilibrium strategies for the responders emerge: symmetric strategies (\textit{Scenario A}) and strategies which require "coordination" of the responders (\textit{Scenario B}). 
Let us now look closer at how these strategies perform when played against each other. We denote the offer levels as $s_1=s,$ where $s\in[0,1]$ and $s_2=s+\delta,$ where  $0\leq \delta < s$ and  $0<s_2\leq 1.$ The payoff matrix for responders playing strategies $p_A, p_B$ against $q_A,q_B$ is shown in Table~\ref{table:payoffs_AB}.
  \begin{table}[h]
\begin{center}
\begin{tabular}{cccc} 
\toprule
   & $q_A $&$q_B=0$ & $q_B=1$ \\  
\midrule \\[-0.5ex]  
 $p_A$ & \large $\frac{3s(s + \delta)}{2(2s + \delta)}$& \large$\frac{3s(s + \delta) + 2\delta(\delta-s)}{2(2s + \delta)}$& \large$\frac{3s(s+\delta)+ 2\delta(2\delta +s)}{2(2s + \delta)}$  \\ [1.5ex] 
$p_B=0$ & \large $\frac{3s(s + \delta )}{2(2s + \delta)} $&\large        $\frac{1}{2}(s + \delta )$&  \large$ s+\delta$\\ [1.5ex]  
$p_B=1$ & \large$\frac{3s(s + \delta)}{2(2s + \delta)}$& \large $s$ & \large $ \frac{1}{2} s$\\ [1ex] 
\bottomrule
\end{tabular}
\end{center}
\caption{Payoff matrix for responders playing strategy $p_A,p_B=1$ and $p_B=0$ in \textit{Scenario A} and \textit{Scenario B}.}
\label{table:payoffs_AB}
\end{table}
Choosing the lower offer with probability zero ($p_B=0$) is the best response against both $q_A$ and $q_B=1,$ however when played against the same strategy $q_B=0$ the strategy is inferior to $p_A$ and $p_B=0.$ Thus, contrary to \textit{Scenario C}, there is no weakly dominant strategy. In order to determine which of the two Nash equilibria will be played by the responders, we can turn to the concept of an evolutionarily stable strategy. As we prove in the Supplementary Information (see Theorem 1) the symmetric strategy $p_A$ is uniquely evolutionarily stable, which means that it is (in this way) superior to other strategies. 

We also explored the replicator dynamics with the co-existence of three types of Nash equilibrium strategies: $\gamma$--players who always choose the strategy of \textit{Scenario A}, $\alpha$--players who always choose the highest offer and $\beta$--players who always choose the lowest offer. The analysis, detailed in the Supplementary Information, reveals that in the stable state $\alpha$-- and $\beta$--players' relative abundances on average yield the same strategy as the \textit{Scenario A} strategy, albeit on a population-wide level rather than an individual one. The abundance of $\gamma$--players depends on the initial conditions. Thus, in the next part, we reduce the analysis to responders playing the Nash equilibrium strategy $p_A.$
\subsection*{Proposers' Nash Equilibrium Strategy}
In this section, we will derive the subgame-perfect Nash equilibrium of the proposers. Without loss of generality, we assume that $s_1\leq s_2.$ As demonstrated in the preceding section, in the cases where $2s_1 \leq s_2,$ both responders choose the proposer with the higher offer and the other proposer receives no reward. Since the abandoned proposer can improve their payoff by increasing their offer to slightly more than half of the other offer, this situation cannot be a Nash equilibrium. For the case where $s_1=s_2=0$, there is a subgame-perfect Nash equilibrium. Here both responders receive zero, no matter what their strategy is. If each responder accepts and chooses different proposers, then both proposers receive the maximal payoff of one. However, this requires responders to accept a zero reward and align, i.e.~to match their choices and can be considered degenerate. Any other matching (or rejection) does not lead to
a subgame-perfect Nash equilibrium. Then one of the proposers (or both) has an expected payoff of less than one and by raising their offer from zero to a sufficiently small offer $\epsilon$, they are able to secure a better payoff, $1-\epsilon$. 

In the following, we will only consider offers satisfying ${0 < s_2 \leq 2s_1}$. The expected payoffs of proposers $1$ and $2,$ with offers $s_1$ and $s_2$ respectively (denoted $\Pi_{P,1}$ and $\Pi_{P,2}$ respectively), are:
\begin{equation}
\begin{aligned}
\Pi_{P,1}&=(1-s_1)\left(pq+p(1-q)+q(1-p)\right)\ ,\\
\Pi_{P,2}&=(1-s_2)((1-p)(1-q)+p(1-q)+q(1-p))\ ,
\end{aligned}
\end{equation}
where $(p,1-p)$ and $(q,1-q)$ are the responders' strategies. We assume that the responders choose the strategy $p_A$  (see ~\eqref{eq:scenarios}) according to the Nash equilibrium \textit{Scenario A} (for reasons why we do not focus on \textit{Scenario B} strategy see above). 
Then we can rewrite the expected payoffs as:
\begin{equation}
    \begin{aligned}
        \Pi_{P,1}&=\frac{3 s_2 (2 s_1 - s_2) (1 - s_1)}{(s_1 + s_2)^2}\ ,\\
        \Pi_{P,2}&=\frac{3 s_1 (2s_2 -  s_1) (1 - s_2)}{(s_1 + s_2)^2}\ .
    \end{aligned}
    \label{opt_prop_payoffA}
\end{equation}
To maximise the payoffs we set $\partial \Pi_{P,i}/\partial s_i=0$, with $i=1,2$, identify the best response of the proposers,
yielding for the first and second proposer:
\[s_1^*= f(s_2)=\frac{s_2^2 + 4s_2}{5s_2 + 2}\quad {\rm and }\quad s_2^*= f(s_1)=\frac{s_1^2 + 4s_1}{5s_1 + 2}\ .\] 
The subgame-perfect Nash equilibrium $(s_1^*,s_2^*)$ has to satisfy $s_1^*=f(f(s_1^*))$ leading to a fourth order polynomial that has four roots $\{0,\frac{1}{2},-3+\sqrt{7},-3-\sqrt{7}\}.$ However, the only  physical, non-trivial solution is $\frac{1}{2}$, which leads to the unique non-degenerate Nash equilibrium of symmetric offers $(\frac{1}{2},\frac{1}{2}).$
In that case, the payoffs of the players are the same in both roles:
\begin{equation}
\begin{aligned}
   \Pi_{R,1}=\Pi_{R,2}= \Pi_{P,1}=\Pi_{P,2}=\frac{3}{8}\ .
\end{aligned}
\end{equation}
Finally, we remark on a potentially counter-intuitive observation: by changing the rules of the game and giving the responders a chance to coordinate for maximizing their common payoff in each subgame, their subgame-perfect Nash equilibrium payoffs are lower than in the MPMR UG.

It is easy to see this from the following. Opting for the strategy where one responder selects the first proposer while the other selects the second results in the responders receiving all of the available reward in every subgame, so more than with any other strategy. Then, the payoffs of the proposers are:
\begin{equation}
    \begin{aligned}
        \Pi^*_{P,1}&=1-s_1 \ ,\quad
        \Pi^*_{P,2}&=1-s_2\ .
    \end{aligned}
    \label{opt_prop_payoffB}
\end{equation}
       However, then the subgame-perfect equilibrium is $(0,0)$ and the responders accept, since, under all offer combinations, the proposers receive their offered split and they have no incentive to raise the offers from zero. Thus, this type of coordination leads to worse outcomes for the responders.
\section*{K Proposers and L Responders}
\label{sec:generalPR}
  Now to the general case of  $K \in \mathbb{N}$ proposers and $L \in \mathbb{N}$ responders where $K,L \geq 2.$
\subsection*{Responders' Strategy}
Let us find the Nash equilibria for the responders for subgames determined by offers from $K$ proposers. Let $p_{ij}=P(\text{$R_i$ choosing $P_j$}),$ $ {i \in \{1,2,\dots,L\}}$, with $j \in \{1,2,\dots,K\}$, denote the probability of the responder $i$ choosing proposer $j$, who offers $s_j\in [0,1].$ Naturally, $\sum_{j=1}^K p_{ij}=1$ and $p_{ij}\geq 0$ and
the expected payoff $\Pi_{R,l}$ of the responder $l$ is:
    \[\Pi_{R,l}=\sum_{i=1}^K s_i W_{li} \cdot p_{li} \ ,\]
where \[ W_{li}=\sum_{j=1}^{L} \frac{1}{j} \left[ \sum_{\alpha:\norm{\alpha}_{1}=j,\alpha_l=1} \prod_{k=1,k\neq l}^{L} p_{ki}^{\alpha_k}(1-p_{ki})^{1-\alpha_k}\right] \ ,\]
with $\alpha \in \{0,1\}^{L}$ and $\norm{.}_1$ being the $L^1$ norm. Each term of the sum in $W_{li}$ describes the probability of precisely $j$ responders (including responder $l$) choosing proposer $i,$ weighted by $\frac{1}{j}.$ Also notice that $W_{li}$ is independent of $p_{li},$ meaning that payoff $\Pi_{R,l}$ is a linear function defined by strategy ${p_l= (p_{l1},p_{l2},\dots p_{lK})}$ and scalars $(s_1 W_{l1},s_2 W_{l2},\dots, s_K W_{lK}).$
In order to find the best response of responder $l$ facing the other responders, we have to solve a constrained optimisation problem on a linear function. Without loss of generality, we set the ordering of proposers to be determined by:
\[s_K W_{lK} \geq s_i W_{li} \ ,\] 
for all $i \in \{1,2,\dots,K\}$ and analyse the derivative of the payoff of responder $l$ with respect to ${p_{li}, i\in \{1, \dots, K-1\}}$ where $p_{lK}=1-\sum_{j=1}^{K-1} p_{lj}:$
\begin{equation}\label{eq:resp_derivative}
\begin{aligned}
    \frac{\partial \Pi_{R,l}}{\partial p_{li}} =   s_i W_{li}-s_K W_{lK}\ .
\end{aligned}
\end{equation}
Thanks to the ordering, we know the derivative (\eqref{eq:resp_derivative}) has only non-positive values. The best response strategy is to set as zero all $p_{li}$ for which  $\frac{\partial \Pi_{R,l}}{\partial p_{li}}<0$ and when determining the remaining probabilities where $\frac{\partial \Pi_{R,l}}{\partial p_{li}}=0$ the responder is ambivalent. Translating these best responses to Nash equilibria for each subgame is more intricate in the higher-dimensional scenario than in the simpler two-on-two case.

Once again, there is a \textit{Scenario 0}-like regime where all proposers offer zero and the responders' strategy is inconsequential-- responders receive zero in all cases. There are also regimes similar to \textit{Scenarios C} and \textit{D} where some of the proposers offer too little compared to the other proposers and their offers get discarded by every responder. In regimes where all offers are sufficiently similar to each other, both symmetric and asymmetric Nash equilibria resembling \textit{Scenario A} and \textit{Scenario B} can emerge, where each proposer has a positive probability of being selected. As an example, consider the case of two responders and three proposers offering identical offers $s \in (0,1].$ Let $p=(p_{11},p_{12},p_{13})$ and $q=(p_{21},p_{22},p_{23})$ represent the responders' probabilities for selecting the proposers. Both $p=q=(1/3,1/3,1/3)$ and $p=(1,0,0)$ and $q=(0,x,1-x)$ with $x \in [0,1]$ constitute Nash equilibria. Thus, a continuum of Nash equilibria exists, contrary to the previous section, where only two Nash equilibria emerge in the case of identical offers. 

Due to these intricacies, we refrain from explicitly deriving parameter regions and all possible Nash equilibria for the responders. Instead, we restrict our analysis to Nash equilibria for the responders that are evolutionarily stable. For that, we consider an infinite population of responders from which we choose the respective number of players in each subgame. Thus, the players cannot coordinate. In Theorem 2 (see 
the Supplementary Information, "General Case"), we prove that for any combination of offers $(s_1,s_2,\dots, s_K)$, apart from purely zero offers ($s_i=0,$ for all $i \in \{1,2,\dots,K\}$), there exists a unique evolutionarily stable strategy for the responders. 
We will consider this responder strategy to be the "superior" Nash equilibrium, in the sense of {\it evolutionary stability}. Now we will show some implicit results about the evolutionarily stable Nash equilibrium.

Without loss of generality, let $ s_K \geq s_{K-1} \geq \cdots \geq s_1$ with $s_K > 0 $ represent the offers from $ K \geq 2$ proposers. Assume that all responders employ the evolutionarily stable strategy $\{p_i\}_{i=1}^K$. When including the same strategy $ p_{li} = p_i $ for all $ l \in \{1, 2, \ldots, L\}, $ $i \in \{1,2,\dots,K\}$ in the derivative (see~\eqref{eq:resp_derivative}), we obtain the same derivative of the payoff for all responders (denoted $\Pi_R$):
\begin{equation}\label{eq:der_r_simplified_GC}
\begin{aligned}
    \frac{\partial \Pi_R}{\partial p_i}&= \frac{1}{L} (s_i f(p_i)-s_K f(p_K))\ ,
    \end{aligned}
\end{equation}
where $i \in \{1,2,\dots,K-1\}$ and $f(x)$ is defined as:  
\begin{equation*}
\begin{aligned}
            &f: [0,1] \to \mathbb{R}\ , \quad f(0)=L\ , \\
           & f(x)= L \sum_{j=1}^{L} \frac{1}{j} 
 {L-1\choose j-1}x^{j-1}(1-x)^{L-j}=\frac{1-(1-x)^L}{x} \ .
 \end{aligned}
\end{equation*}
The function $f$ is continuous and strictly decreasing on $[0,1]$ (see Lemma 1 in the Supplementary Information). Since $s_K$ is the highest offer, $p_K$ has to be greater than zero in the Nash equilibrium, which means no other $p_i$ can be equal to one. Therefore, all derivatives must be non-positive in the Nash equilibrium. Then, for the evolutionarily stable strategy $\{p_i\}_{i=1}^K$ it has to follow for all $i \in \{1,2,\dots, K-1\}$ that:  
\begin{equation}
\begin{aligned}
    &  p_i=0  &\text{and }\  &s_i f(p_i)-s_K f(p_K)\leq 0 \ ,\\
    \text{or}\\
    & p_i>0   &\text{and } \ &p_i=f^{-1}\left(\frac{s_K}{s_i}f(p_K)\right) \ .
\end{aligned}\label{eq:eq_rules}
\end{equation}
Note, it is clear that when $s_i=0,$ $p_i$ has to be zero too.

Next, we show that for any offer combination (apart from pure zero offers) there exists a unique solution that follows~\eqref{eq:eq_rules}. 
Since the function $f$ is strictly decreasing on $[0,1]$ it is also invertible on $[0,1].$ We define the function $h$ that describes the sum of the $p_i$'s that follow~\eqref{eq:eq_rules} for a fixed $p_K:$ 
\begin{equation}\label{eq:g_func}
            h: [0,1] \to \mathbb{R}, \quad
            h(p_K)=p_K+\sum_{i=1}^{K-1} f_0^{-1}\left(\frac{s_K}{s_i}f(p_K)\right),
\end{equation}
where $f_0^{-1}(x)=f^{-1}(x)$ on $[1,L]$ and zero on $[L,+\infty].$ To find the equilibrium strategy we need to find $p_K$ such that $h(p_K)=1.$ The function $h$ is strictly increasing in $p_K.$ For $p_K=0: h(0)=0,$  for $p_K=1$ we have $h(1)=1$ if $s_K \geq Ls_{K-1}$ and $h(1)>1$ otherwise. Since $h$ is continuous and monotone there has to be a unique value that satisfies $h(p_K^*)=1.$ This solution is a Nash equilibrium and as we will show in the Theorem 2 (see the Supplementary Information, "General Case") it is also an evolutionarily stable strategy.

For a large number of proposers and responders it is not possible to find a closed-form solution of the responders' evolutionarily stable strategy (numerically it is possible e.g.~by utilizing the $g$ function in~\eqref{eq:g_func}), however, this does not prevent us from deriving the Nash equilibria of the proposers in the next section.
\subsection*{Proposers' Strategy}
In this section, we analyse the proposers' subgame-perfect Nash equilibria. We assume that for each offer regime, all responders play with a unique evolutionarily stable strategy, found as a solution to~\eqref{eq:eq_rules}. 

First, let us comment on the situation where all offers are zero; in this situation, the responders receive zero no matter what strategy they choose. If the number of proposers is bigger than the number of responders there exists no subgame-perfect Nash equilibrium. However, if $K\leq L$ we have degenerate equilibria where each of the $K$ responders have to choose one of the $L$ proposers with probability one and the other $K-L$ responders may choose an arbitrary strategy. Then, all proposers earn a payoff one and cannot improve further.
When the responders opt for strategies that lead to at least one proposer having a probability of selection less than one, this cannot be a subgame perfect Nash equilibrium. The proposer who earns less than one could offer a sufficiently small amount $\varepsilon$, leading to all responders selecting them and resulting in an improved payoff of $1-\varepsilon$.  
\begin{figure*}[ht]
    \centering
      \includegraphics[scale=0.43,trim={4cm 8cm 4cm 8cm},clip]{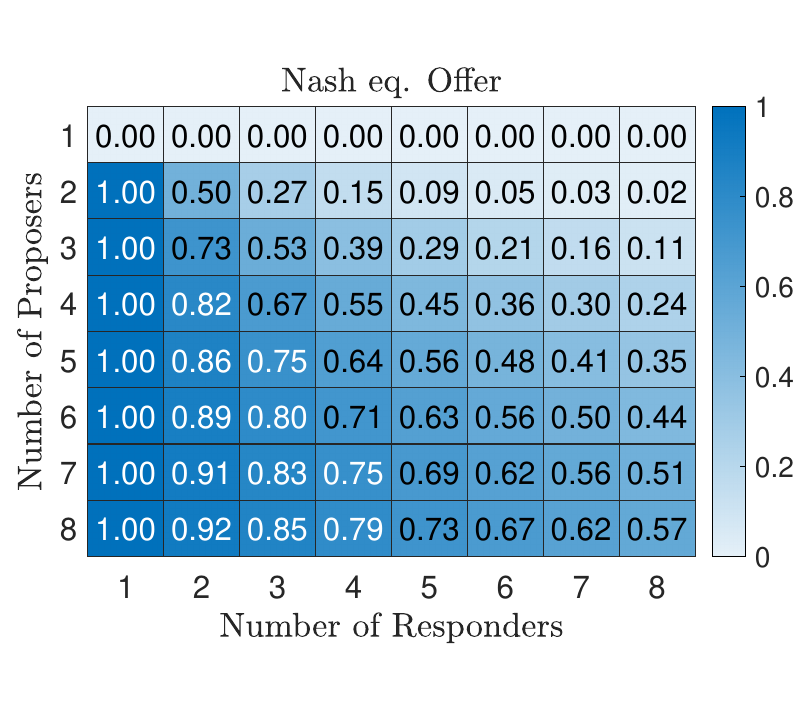}
    \includegraphics[scale=0.43,trim={4cm 8cm 4cm 8cm},clip]{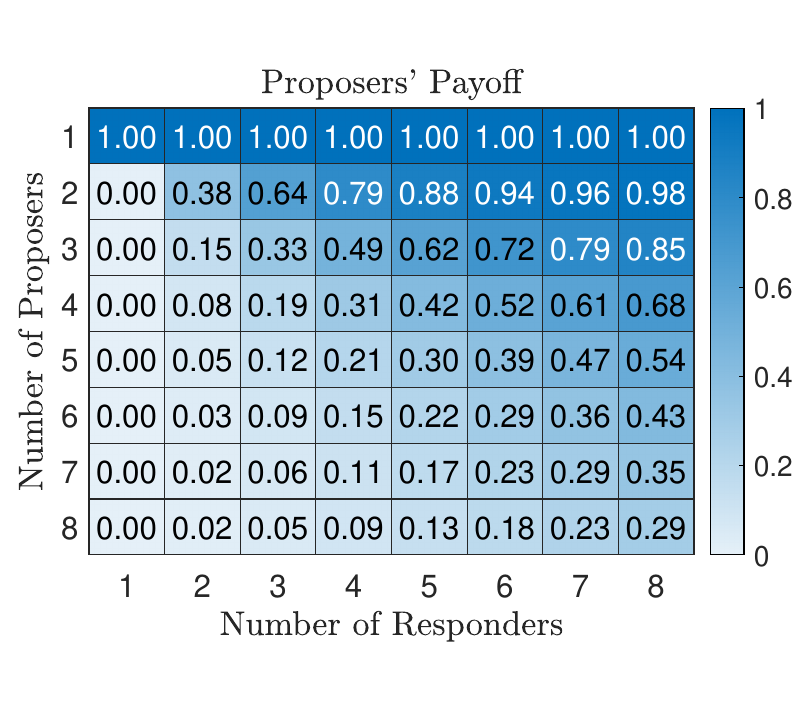}
     \includegraphics[scale=0.43,trim={4cm 8cm 4cm 8cm},clip]{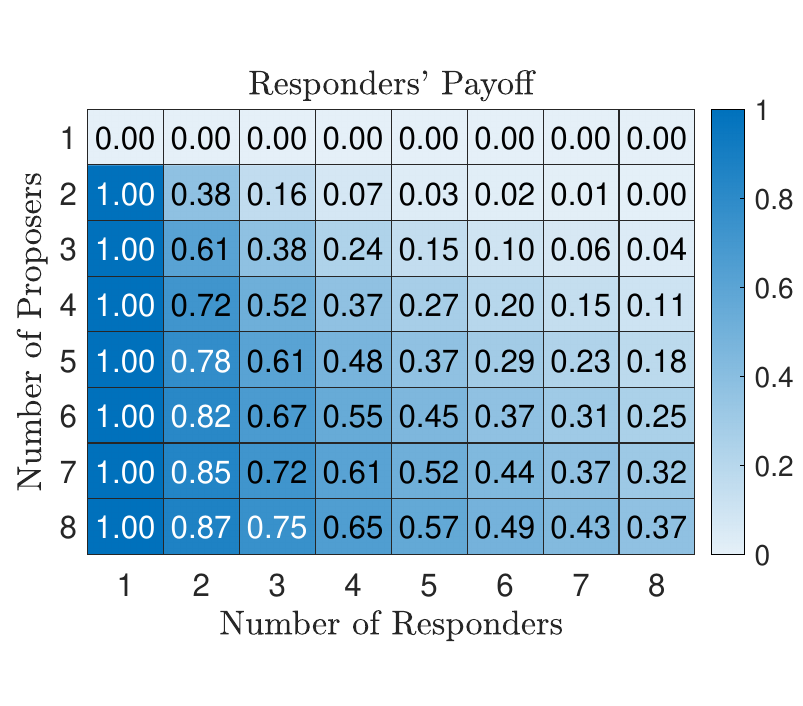}
    \caption{Numeric values of the proposers' Nash equilibria offers from~\eqref{eq:final_Nash} (left) and the expected payoffs for the proposers (middle) and the responders (right) under these equilibria (see~\eqref{eq:exp_payoffs_final}). These are shown for varying number of responders and proposers, under the assumption that responders play their evolutionarily stable strategy and thus select each proposer with the same probability when the offers are identical. }
    \label{fig:EP_proposers_responders}
\end{figure*}
Next, we focus on offer regimes where the highest offer is bigger than zero, i.e.~where $s_K>0$. Offer regimes where some of the proposers end up with zero selection probabilities under responders' evolutionarily stable strategy cannot constitute a subgame-perfect Nash equilibrium. The rationale behind this is straightforward: a proposer with a zero probability of being selected will earn no reward, and by simply offering a sufficiently higher amount, any proposer can secure a positive payoff. If we had for any $i:$ $\frac{\partial \Pi_{R}}{\partial p_{i}} < 0$ (see~\eqref{eq:der_r_simplified_GC} and~\eqref{eq:eq_rules}), then this would give a zero selection probability for proposer $i$. Thus, we can restrict our search only to those regimes where the system of equations $\frac{\partial \Pi_{R}}{\partial p_{i}}$ from~\eqref{eq:der_r_simplified_GC} is equal to zero for all $ i \in \{1,2,\dots,K-1\}$ and $p_i\neq 0$ for all $i \in \{1,2,\dots,K\}.$ Then it is true  for all $i \in \{1,2,\dots,K-1\}$ that:
\[C:= \frac{s_i}{p_i}\left(1-(1-p_i)^L\right)=\frac{s_K}{p_K} \left(1-(1-p_K)^L\right)\ .\]
Additionally, we know that the payoff of proposer $i$ with offer $s_i$ is given as:
\begin{equation*}
\begin{aligned}
\Pi_{P,i}=(1-s_i)\left(1-(1-p_i)^L\right) \ .
\end{aligned}
\end{equation*}
In order to find the subgame-perfect Nash equilibrium of proposers (for details see Theorem 3 in the Supplementary Information, "General Case") 
we look at the derivative:
\begin{equation}
\begin{aligned}\label{eq:payoff_der}
\frac{\partial \Pi_{P,i}}{\partial s_i}
&=\left(\frac{g_i^2C}{s_i^3H}-\frac{g_iC}{s_i^2}\right)\left(L(1-p_i)^{L-1}-C \right)-\frac{p_ig_iC}{s_i^2H} \ ,
\end{aligned}
\end{equation}
where $g_i=\frac{p_i^2}{(1-p_i)^{L-1}(Lp_i+1-p_i)-1}$ and  $H=\sum_{j=1}^K \frac{g_j}{s_j}$. In the equilibrium, the derivative in~\eqref{eq:payoff_der} has to be equal to zero for all $i \in \{1,2,\dots,K\}.$ In Theorem 3 (see the Supplementary Information) we show that such solutions must be symmetric, i.e.~$s_i=s, s \in (0,1)$ for all $i \in \{1,2,\dots, K\}.$ In that case also the selection probabilities of responders are symmetric $p_i=p=\frac{1}{K}$ for all $i \in \{1,2,\dots,K\},$ since this solution satisfies the conditions for the unique responders' ESS strategy.
Taking all this into account, solving the system in~\eqref{eq:payoff_der} being equal to zero is equivalent to solving:
\begin{equation*}  
\begin{aligned}
0&=\left(\frac{g}{s}-H\right)\left(L(1-p)^{L-1}-C \right)-p \ ,\\
\end{aligned}
\end{equation*}
where $g=\frac{p^2}{(1-p)^{L-1}(Lp+1-p)-1}$,  $H=K\frac{g}{s}$ and $C=\frac{s}{p}\left(1-(1-p)^L\right)$. From this we can derive the unique solution $s^*:$
\begin{equation*}
\begin{aligned}
s^*&=\frac{g(1-K)L(1-p)^{L-1}}{g(1-K)\frac{1-(1-p)^L}{p} +p}\ .
\end{aligned}
\end{equation*}
By submitting $p=\frac{1}{K}$ we get:
\begin{equation}
\begin{aligned}
s^{*}&=\frac{L (K - 1)^{L}}{K^{L + 1}-(K - 1)^{L-1} ((K - 1) K + L) }.
\end{aligned}
\label{eq:final_Nash}
\end{equation}
It is easy to show that $s^{*} \in (0,1)$ for all $K,L\geq2$ and that the second derivative of~\eqref{eq:payoff_der} is always negative (see Proposition 1 in the Supplementary Information). Therefore, $s^*$ in~\eqref{eq:final_Nash} is indeed a subgame-perfect Nash equilibrium under the assumption that responders behave according to the evolutionarily stable strategy. 
Apart from the degenerate case of all offers being equal to zero, this is the only subgame-perfect Nash equilibrium. The expected payoff of proposers and responders is:
 \begin{equation}\label{eq:exp_payoffs_final}
     \begin{aligned}
         \Pi_P &= (1-s^*)\left(1-\left(1-\frac{1}{K}\right)^L\right),\\
         \Pi_R &= s^*\frac{K}{L}\left(1-\left(1-\frac{1}{K}\right)^L\right).
     \end{aligned}
 \end{equation}

Next, we analyse the result in~\eqref{eq:final_Nash} and investigate how the competition (im)balance of both sides of the market influences the thresholds $s^*$. Numeric values of equilibrium offers and equilibrium payoffs for a small number of proposers and responders are shown in Fig.~\ref{fig:EP_proposers_responders}. 
One can notice, that for a fixed number of proposers ($\geq 2$) and an increasing number of responders, i.e.~increasing responder competition, the equilibrium offers decrease. On the other hand, for a fixed number of responders ($\geq 2$) and increasing proposer competition, the proposers' offers increase. This observation aligns with intuitive expectations about the effect of competition. 
The expected payoffs for responders and proposers follow the same trend. The proposers achieve the highest expected payoff when a single proposer faces any number of responders. Conversely, a responder gains the most when they are the sole responder facing at least two proposers.
Subsequently, we shall examine the Nash equilibrium strategy and payoffs of both proposers and responders in the asymptotic scenario where $K \gg 1$ and $L \gg 1$ and the ratio of the number of proposers to the number of responders is determined by a constant factor $c>0,$ i.e.~$K=cL.$ A low value of $c$ means there is strong responder competition, high $c$ implies strong proposer competition. Then the equilibrium $s^*$ depends on $c$ as:
\begin{equation}
    \begin{aligned}
      s^*_c = \frac{1}{c(e^\frac{1}{c} - 1)}\ .
    \end{aligned}\label{eq:limit_offer}
\end{equation}
The payoff of the proposers and responders in this asymptotic situation is: 
  \begin{equation}
     \begin{aligned}
        \lim_{L\to \infty } \Pi_P &= 1 - \frac{c + 1}{e^\frac{1}{c}c} \ ,      \quad
         \lim_{L\to \infty } \Pi_R &= \frac{1}{e^\frac{1}{c}} \ .
     \end{aligned}
     \label{eq:limit_payoffs}
 \end{equation}

In Fig.~\ref{fig:c_dependence} we plot the results from ~\eqref{eq:limit_offer} and~\eqref{eq:limit_payoffs}. One can see how variations in the competition factor $c$ impact the offers and payoffs in the limit.
\begin{figure}
    \centering
  \includegraphics[scale=0.5,trim={3cm 8.6cm 2.5cm 8.6cm},clip]{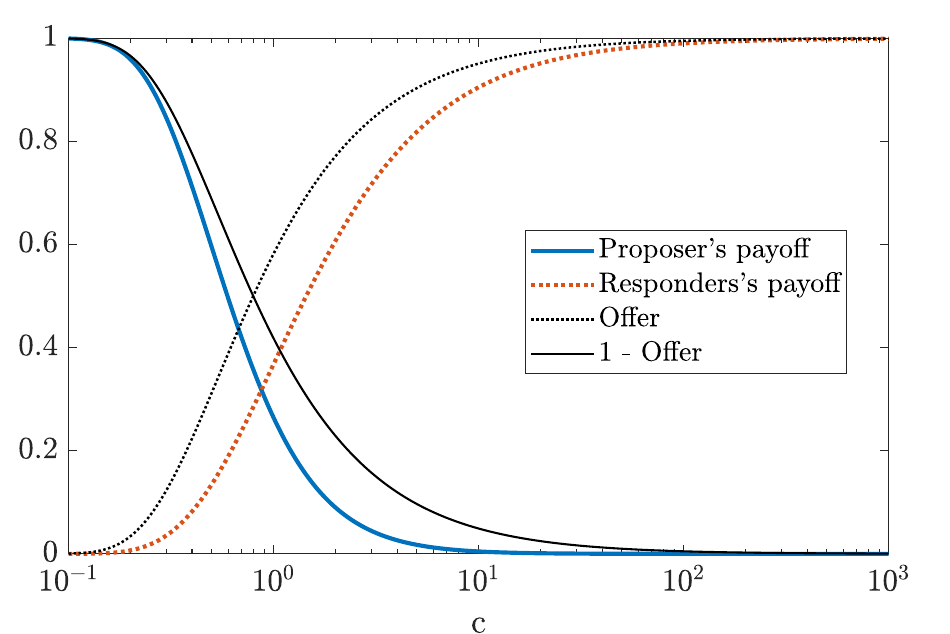}
    \caption{The expected payoff for proposers (blue), for responders (red dotted), the offer level (black) and 1-the offer level (black dotted) in subgame-perfect Nash equilibrium with respect to the proposer-responder ratio $c$, i.e.~$K=cL$, in the large $L$ limit 
    (compare~\eqref{eq:limit_offer} and~\eqref{eq:limit_payoffs}). The differences between the offer and expected payoffs arise from "inefficiencies". That is, with some probability some responders choose the same proposer and some proposers may not get chosen by any responder.}
    \label{fig:c_dependence}
\end{figure}
In the next section, we look deeper into our analysis and explore possible implications of the MPMR UG on fairness.
\section*{Discussion}\label{sec:discussion}
Throughout our evolutionary history, the interactions within social communities have played a crucial role in shaping the behaviour of our species. Many of these interactions were not interactions in pairs but rather in groups, suggesting that the MPMR UG with multiple players on both sides can contribute to the ongoing debate on \textit{fairness} extending beyond the insights provided by the one vs.~one UG. 
In this section, we will discuss some of these potential implications. 

The perception of what is a \textit{fair} or \textit{equitable} division might be impacted by the properties of the community one is part of. For example, individuals with a particular skill or opportunity might represent the proposers, while unskilled individuals, who are still essential for task completion, are the responders. Our model suggests that if responders perceive a shortage of skilled individuals, they may place greater value on this expertise and, consequently, find it \textit{fair} to accept a smaller share of the reward in exchange for assisting the proposer.

Then a central question arises: what is the underlying sense of group size balance between proposers and responders (i.e.~proposer-responder ratio $c$) in the (local) community? And consequently, what would responders deem a fair share of the reward in such a situation? A basic initial assumption could be a balanced scenario where the number of proposers equals that of responders, i.e.~when $c=1$. In this case, the subgame-perfect Nash equilibrium offer $s^*_1$ is equal to $\approx 0.582,$ which is slightly above the equality threshold of $0.5.$ The expected payoff of responders is $\approx 0.368$ and of proposers is $\approx 0.264$ (see~\eqref{eq:limit_offer} and~\eqref{eq:limit_payoffs}). When a responder is directly approached by a proposer (resembling the one vs.~one UG), they may deem it fair to receive a share of the reward equivalent to what they would obtain in the community setting of the MPMR UG, i.e.~36.87\% of the reward. 

However,  when $c=1$, a proposer receives a smaller payoff than a responder. Drawing from the previous analogy, we might anticipate a decrease in the number of individuals specializing in the skill until the payoff difference is eliminated. 
Such an "equity" ratio $c$ when the payoffs for members of both groups are identical is $c=0.872,$ meaning there are fewer proposers than responders.
The payoff of each proposer and responder is approximately $0.318$ and the proposers' offers are around $0.534.$ Once again, a responder could deem it fair to get the same payoff of $0.318$ from the one vs.~one interaction with a proposer as they would enjoy in the MPMR UG scenario with many players.
 
Note if there is some cost the proposer has to pay in order to be able to split and share the reward (e.g.~energy and time invested for training to acquire a skill, or alternatively, to find an opportunity) their actual payoff is lower accordingly. This moves the blue curve of the proposers' payoff in Fig.~\ref{fig:c_dependence} down, changing the equity ratio to a lower number, leading to an even smaller proportion of proposers in the community and finally smaller rewards for the responders in the equity state. 

Next, let us compare the derived thresholds with the literature on the UG and the Dictator Game in which the responder cannot refuse the split and gets what is offered by the proposer. 

As an indication of fairness norms, one might look at the outcomes of the Dictator Game. Even though their offers cannot be refused, proposers in experiments often give a positive reward to the responder. In a meta-analysis by Engel~\cite{engel2011} the mean offer in the Dictator Game was $28\%$ with a modal (i.e.~typical) offer of zero among W.E.I.R.D. societies. Conversely, in three small-scale societies, Henrich et al.~\cite{henrich2005} found mean offers of 20\%, 31\%, and 32\% percent and only a few subjects offered zero. Another recent meta-analysis by Cochard et al.~\cite{cochard2021} supports the results with a calculated mean offer of 30.6\% in the Dictator Game. We can notice these values are very close to our estimated payoff of the responder in the balanced case (31.8\%). 

In a meta-analysis of UG studies, Oosterbeek et al.~\cite{oosterbeek2004} found a mean offer of $40\%$ and Cochard et al.~\cite{cochard2021} found the mean to be $42.58\%.$ 
Henrich et al.~\cite{henrich2005} conducted experiments within small-scale societies and found the mean offers ranging from 26 \% to 58\% for different societies.
One can ask whether proposers' behaviour can be attributed to fairness concerns or simply strategic behaviour, wherein they offer proposals that maximise their payoffs given the assumed distribution of acceptance among responders. For example, Roth et al.~\cite{roth1991} found evidence supporting strategic behaviour among proposers. 
On the other hand, Henrich et al.~\cite{henrich2005} concluded that offers tend to be higher than the optimal payoff-maximizing offer, presumably due to pessimism regarding rejection frequencies and ambiguity aversion. 
These payoff maximizing offers are found to be around 25\%-40\% for different small-scale societies. Nevertheless, Henrich et al.~also noted instances where certain groups exhibited a tendency to accept nearly all low offers, while other groups commonly rejected high offers. Furthermore, Solnick~\cite{solnick2001} found the average minimally acceptable offer (the offer below which the responder rejects the offer) to be 30.8\%. Once again, the thresholds are quite similar to the ones proposed from the MPMR UG.

Other authors, e.g.~Schuster~\cite{schuster2017}, claim the Golden Ratio, of about 0.618 vs.~about 0.382, to be the solution for fair thresholds and supports this claim by providing numerous examples from the literature that illustrate real-life situations exhibiting similar patterns. This threshold is once again close to ours, even though the author describes different mechanisms for achieving it.
\section*{Conclusions}\label{sec:conclusions}
We have proposed a multi-player version of the UG, which we named the Multi-Proposer-Multi-Responder Ultimatum Game, with multiple responders and proposers playing simultaneously. 
Our work offers a new perspective on the UG with the interplay between proposer and responder competition. We analysed the responders' strategy patterns and found that there can be a continuum of Nash equilibria of responders for some subgames (determined by offers from proposers), however, there is only one unique evolutionarily stable strategy in each subgame. We analytically derive subgame-perfect Nash equilibria of the proposers with respect to this strategy, and find that situations with multiple proposers and responders in both groups lead to non-trivial offer values. These are unique for each parameter regime (except for the \textit{degenerate} solutions of all offers being zero). We stress that our model does not include bargaining sequential dynamics to reach non-trivial offer levels and we achieve this under a one-shot setting. Admittedly, the game's assumptions are heavily simplified when compared to complex economic and societal realities. We consider all goods (or jobs, flowers, goods etc.) to be of identical quality, all with the possibility to be reached in the same way and we assume all responders have full information of all the proposals. Yet, we believe there is an untapped potential in examining notions of fairness within the context of larger groups of players. Since real-world interactions rarely involve just two individuals, and the presence of external alternatives remains pertinent, we believe this expanded viewpoint can shed more light on our understanding of why fairness is perceived as it is. 

Our analysis has revealed the potential impact of the ratio between the (asymptotic) number of proposers and responders on the perception of fairness in multi-player scenarios. We postulate that the sense of fairness within these scenarios then subconsciously influences people's behaviour in the UG laboratory experiments. In particular for the UG played on social networks, MPMR UG-like situations happen naturally in the neighbourhood of individuals. Conversely, the MPMR UG can explain the incentives for the way social networks evolve and restructure. 
We compared these insights with the experimental literature of one vs.~one UG and Dictator Game and found them to be of intriguing similarity. 

We stress we do not claim the MPMR UG to be the sole explanation of the observed behaviour, rather, we suggest it as a noteworthy addition to the ongoing discussion on fairness.
\begin{acknowledgments}
We would like to thank B\'{a}lint Homonnay for his comments and helpful discussion. This project has received funding from the European Union’s Horizon 2020 research and innovation programme under grant agreement No. 955708.
\end{acknowledgments}
\bibliography{references}
\end{document}


\title{Supplementary Information: Fairness in Multi-Proposer-Multi-Responder Ultimatum Game}
\author{\orcidlink{0000-0002-0062-3377}Hana Krakovsk\'{a}}
\email[Corresponding author:]{hana.krakovska@meduniwien.ac.at}
\affiliation{Institute of the Science of Complex Systems, Center for Medical Data Science, Medical University of Vienna, Spitalgasse 23, Vienna, 1090, Austria}
\affiliation{Complexity Science Hub, Josefstädter Str. 39, Vienna, 1080, Austria}
\author{\orcidlink{0000-0002-4045-9532}Rudolf Hanel} 
\affiliation{Institute of the Science of Complex Systems, Center for Medical Data Science, Medical University of Vienna, Spitalgasse 23, Vienna, 1090, Austria}
\affiliation{Complexity Science Hub, Josefstädter Str. 39, Vienna, 1080, Austria}
\author{\orcidlink{0000-0002-1698-5495}Mark Broom}
\affiliation{Department of Mathematics, City, University of London, Northampton Square, London, EC1V 0HB, United Kingdom}
\maketitle

In this Supplementary Information, we will prove and show several key theorems and observations that are utilized in the main part of the manuscript. In the following section we start by proving that two vs.~two Multi-Proposer-Multi-Responder Ultimatum Game (MPMR UG) has a unique evolutionarily stable strategy. 
We also demonstrate that under replicator dynamics with a population composed of players playing Nash equilibria, the population in a stable state plays the derived evolutionarily stable strategy, albeit on a population-wide level. 
In  "General Case", we extend the results from two vs.~two MPMR UG to arbitrary numbers of multiple responders and multiple proposers and show that also, in this case, there is a unique evolutionarily stable strategy for any subgame (apart from pure zero offers). Furthermore, we prove that if responders play this ESS Nash equilibrium in each subgame then in the subgame-perfect Nash equilibrium proposers have to propose the same offers.
\phantomsection
\section*{Two Proposers and Two Responders}\label{app:section_two_two}
In this section, we will prove results concerning two vs.~two MPMR UG (refer to the section with the same title in the main manuscript). 
We will prove that the symmetric strategy 
\begin{equation}\label{eq:pA}
     p_A = \frac{2s_1 - s_2}{s_1 + s_2} \ ,
\end{equation}
where $s_1, s_2$ are offers which are similar enough ($s_2<2s_1$), is an \hyperref[appendix:ESS_two_two]{evolutionarily stable strategy}. Then in 
\hyperref[appendix:repliator]{Replicator Dynamics} we demonstrate that under replicator dynamics with three different Nash equilibrium strategy types, the solutions on the stable line of equilibria form the mixed strategy $p_A$, albeit on a population level.
\subsection*{Evolutionarily Stable Strategy}\label{appendix:ESS_two_two}
First, we start with the definition of an evolutionarily stable strategy.
\begin{defin}\label{def:2x2ESS}
    Let us denote by $\Pi(X,Y)$ the payoff of a player playing strategy $X$ against strategy $Y.$ Strategy $X$ is called evolutionarily stable if for any $X\neq Y$:\
\begin{enumerate}
\item $ \Pi(X,X)>\Pi(Y,X)$ \ ,
\end{enumerate}
or 
\begin{enumerate}
\item[2.]$ \Pi(X,X)=\Pi(Y,X) \text{ and } \Pi(X,Y)>\Pi(Y,Y) \ .$
\end{enumerate}
\end{defin}
\noindent In the following theorem we prove that strategy $p_A$ (see \eqref{eq:pA}) is a unique evolutionarily stable strategy.
\begin{theorem} Consider MPMR UG with two proposers and two responders. Consider proposers' offers, denoted as $s_1, s_2,$ where $0< s_1 \leq s_2\leq 2s_1$ and $ s_2 \leq 1.$ Then, the mixed strategy $(p_A,1-p_A),$ where $\displaystyle{p_A=\frac{2s_1-s_2}{s_1+s_2}}$ is the probability of choosing the first proposer, is a unique evolutionarily stable strategy for each subgame defined by offers $s_1, s_2$.
\label{prop:ESS}
\end{theorem}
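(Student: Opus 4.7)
The plan is to verify Definition~\ref{def:2x2ESS} directly for $X=(p_A,1-p_A)$ by writing the responder payoff in closed form. If a focal responder plays $X=(p,1-p)$ against a co-responder playing $Y=(q,1-q)$, the four joint outcomes contribute $pq\cdot s_1/2$ (both approach proposer~1 and split the offer), $p(1-q)\,s_1$ (focal alone at proposer~1), $(1-p)q\,s_2$ (focal alone at proposer~2), and $(1-p)(1-q)\,s_2/2$ (both approach proposer~2). Summing yields
\[
 \Pi(X,Y)=p\,s_1\bigl(1-\tfrac{q}{2}\bigr)+(1-p)\,s_2\,\tfrac{1+q}{2}.
\]
Because $\Pi(\cdot,X)$ is linear in its first coordinate, the only fully-mixed candidate for a symmetric Nash equilibrium is the root of the indifference equation $s_1(1-p/2)=s_2(1+p)/2$, which solves to exactly $p=p_A$. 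At this value $\Pi(Y,X)=\Pi(X,X)$ holds for \emph{every} $Y$, so clause~(1) of Definition~\ref{def:2x2ESS} always fails and clause~(2) must be checked.

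The heart of the argument is the identity
\[
 \Pi(X,Y)-\Pi(Y,Y)\;=\;(p_A-q)\Bigl[s_1\bigl(1-\tfrac{q}{2}\bigr)-s_2\,\tfrac{1+q}{2}\Bigr]\;=\;\tfrac{s_1+s_2}{2}\,(p_A-q)^2,
\]
where the final reduction follows from the fact that the bracketed expression is affine in $q$ with unique root $q=p_A$ and leading coefficient $-(s_1+s_2)/2$. The resulting perfect square is strictly positive whenever $q\neq p_A$, establishing clause~(2) and hence that $p_A$ is an ESS.

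For uniqueness, any competing ESS must first be a symmetric Nash equilibrium, and the indifference calculation above already eliminates every fully-mixed strategy other than $p_A$. It remains to rule out the two pure candidates. The strategy $(1,0)$ gives $\Pi((1,0),(1,0))=s_1/2<s_2=\Pi((0,1),(1,0))$ using $s_2\geq s_1>s_1/2$, so it is not even Nash. The strategy $(0,1)$ gives $\Pi((0,1),(0,1))=s_2/2$ and $\Pi((1,0),(0,1))=s_1\geq s_2/2$ by the hypothesis $s_2\leq 2s_1$, so Nash requires the boundary $s_2=2s_1$; but in that case $p_A=0$ and $(0,1)$ merely relabels the ESS already exhibited. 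The main obstacle is algebraic: extracting the coefficient $(s_1+s_2)/2$ so that the stability difference reduces transparently to a square, together with careful bookkeeping at the boundary cases $s_1=s_2$ (where $p_A=1/2$) and $s_2=2s_1$ (where $p_A$ collapses to a pure strategy that is then identified with $(0,1)$).
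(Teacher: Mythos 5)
Your proof is correct and follows essentially the same route as the paper: both arguments hinge on showing $\Pi(p_A,Y)-\Pi(Y,Y)$ is a positive constant times $(p_A-q)^2$ (your coefficient $\tfrac{s_1+s_2}{2}$ matches the paper's $\tfrac{(\,\cdot\,)^2}{2(2s+\delta)}$ after the substitution $s_1=s$, $s_2=s+\delta$), with equality in the first ESS condition holding automatically because $p_A$ is the interior indifference point. The only minor divergence is in uniqueness, where the paper notes directly that $\Pi(A,X)>\Pi(X,X)$ lets $A$ invade any $X\neq A$, while you enumerate symmetric Nash equilibria and dispose of the pure candidates by hand; both are valid and your boundary bookkeeping at $s_2=2s_1$ is handled correctly.
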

\begin{proof}

We denote the strategy $(p_A,1-p_A)$ as $A$ and prove it is evolutionarily stable according to the second condition of Definition~\ref{def:2x2ESS}, we also refer to the notation introduced there. 
Let us rewrite the offers as $s_1=s$ and $s_2=s+\delta,$ where $0\leq \delta \leq s.$  
It is easy to see (see the manuscript for details) that:
\begin{equation*}
    \Pi(A,A)= \frac{3s(s + \delta)}{2(2s + \delta)} \quad \text{and} \quad 
    \Pi(Y,A)= \frac{3s(s + \delta)}{2(2s + \delta)} \quad \text{for any strategy } Y\ ,
\end{equation*}
which proves the first property of the second condition. 
 Next, we prove that
\[\Pi(A,Y)>\Pi(Y,Y) \text{ for any strategy }Y\neq A\ .\]  
Let us represent strategy $Y$ as having the probability $p \in [0,1]$ of going to the first proposer and probability $1-p$ of going to the second proposer. Then
\begin{equation}
    \begin{aligned}
    \Pi(A,Y)-\Pi(Y,Y)&= \frac{(\delta - s + \delta p + 2ps)^2}{2(2s+\delta)}\ , 
    \end{aligned}
\end{equation} 
where the equivalence is reached when
\[ \delta - s + \delta p + 2ps=0 \quad  \iff \quad p=\frac{s-\delta}{2s+\delta}\ . \] Since strategy $\displaystyle{p=\frac{s-\delta}{2s+\delta}}$ is in fact strategy $A,$ we showed that $\Pi(A,Y)>\Pi(Y,Y)$  for any strategy $ Y\neq A.$ Additionally, strategy $A$ is uniquely ESS, since for any strategy $X\neq A,$ we can not have $\Pi(X,X)\geq \Pi(Y,X)$ because $ \Pi(X,X)<\Pi(A,X).$ 
\end{proof}
\subsection*{Replicator Dynamics}\label{appendix:repliator}
In this subsection, we analyse the replicator dynamics involving three types of responders, each corresponding to a different Nash equilibrium strategy. We demonstrate that under the replicator dynamics the abundance ratios in the stable states correspond to the strategy $p_A$ (see~\eqref{eq:pA}), albeit on a population level.

Let us consider three types of players: $\gamma$--players that always play evolutionarily stable strategy $p_A$, $\alpha$--players who always choose the higher offer, $\beta$--players who always choose the smaller offer. If the offers are the same, $\alpha$ and $\beta$ players choose randomly one of them. We denote the fractions of $\gamma$--players, $\alpha$--players, and $\beta$--players in the population as $x_1$, $x_2$, and $x_3$, respectively, where $x_1 + x_2 + x_3 = 1.$ We use the same notation as in Theorem~\ref{prop:ESS}, where the offers are denoted as $s_1$ and $s_2,$ with $s_1 = s$ and $s_2 = s + \delta$, where $0 \leq \delta \leq s.$ Using the payoff matrix presented in Table 1 in the main manuscript, denoted here as $P$, we can construct the equations of the replicator dynamics:
    \begin{equation*}
        \dot{x}_i=x_i\left[(Px)_i-x^TPx\right] \quad \text{for }i \in \{1,2,3\} \ ,
        \label{eq:replicator_general}
    \end{equation*}
    which gives us 
        \begin{equation}
        \begin{aligned}
         \dot{x}_1&=x_1\frac{(x_1(2\delta +s) + x_2(\delta + 2s) - 2\delta -s )^2}{2\delta + 4s},\\
       \dot{x}_2&=x_2\frac{\splitfrac{\delta^2(4x_1^2 + 4x_1 x_2 - 6x_1 + x_2^2 - 3x_2 + 2) + \delta s( 4x_1^2 + 10 x_1x_2 - 9x_1+}{ 4 x_2^2 - 9 x_2 + 5) + s^2(x_1^2 + 4x_1 x_2 - 3 x_1 + 4 x_2^2 - 6 x_2 + 2)}}{2\delta + 4s}\ ,
       \label{eq:replicator}
     \end{aligned}
     \end{equation}
     where $\dot{x}_3=-\dot{x}_1-\dot{x}_2.$
     
Let us analyse the stability of the system.
     It is easy to show that there are two equilibria $\{0,0,1\},$  $\{0,1,0\}$ and a line of equilibria ${l^*=\{1-x_2\frac{\delta + 2s}{2\delta + s},x_2,x_2\frac{s-\delta}{2\delta + s}\}}$.
     The linearisation of the system at equilibria $\{0,0,1\}$ and $\{0,1,0\}$ reveals that both equilibria are unstable.

     Linearisation at the line reveals that the equilibria have one zero and one negative eigenvalue. As seen in the numerically analysed dynamics (see Figure~\ref{fig:replicator_simplex}) the equilibrium line $l^*$ is globally attracting on the whole simplex apart from the two unstable equilibria.
    
    In the stable equilibrium line case $l^*$ the expected payoffs of responders are:
    \begin{equation}
        \begin{aligned}
        \Pi(\alpha)=\Pi(\beta)= \Pi(\gamma)= \frac{3s(\delta + s)}{2(\delta + 2s)}\ ,
     \end{aligned}
     \end{equation}
    and the expected probability of going to the lower offer proposer in the mixed case on the line is
   $\frac{s-\delta}{\delta +2s}$ 
    which is the same probability as in strategy $p_A.$ 

\begin{figure}
    \centering
      \includegraphics[scale=0.45,trim={3cm 3cm 3cm 3cm},clip]{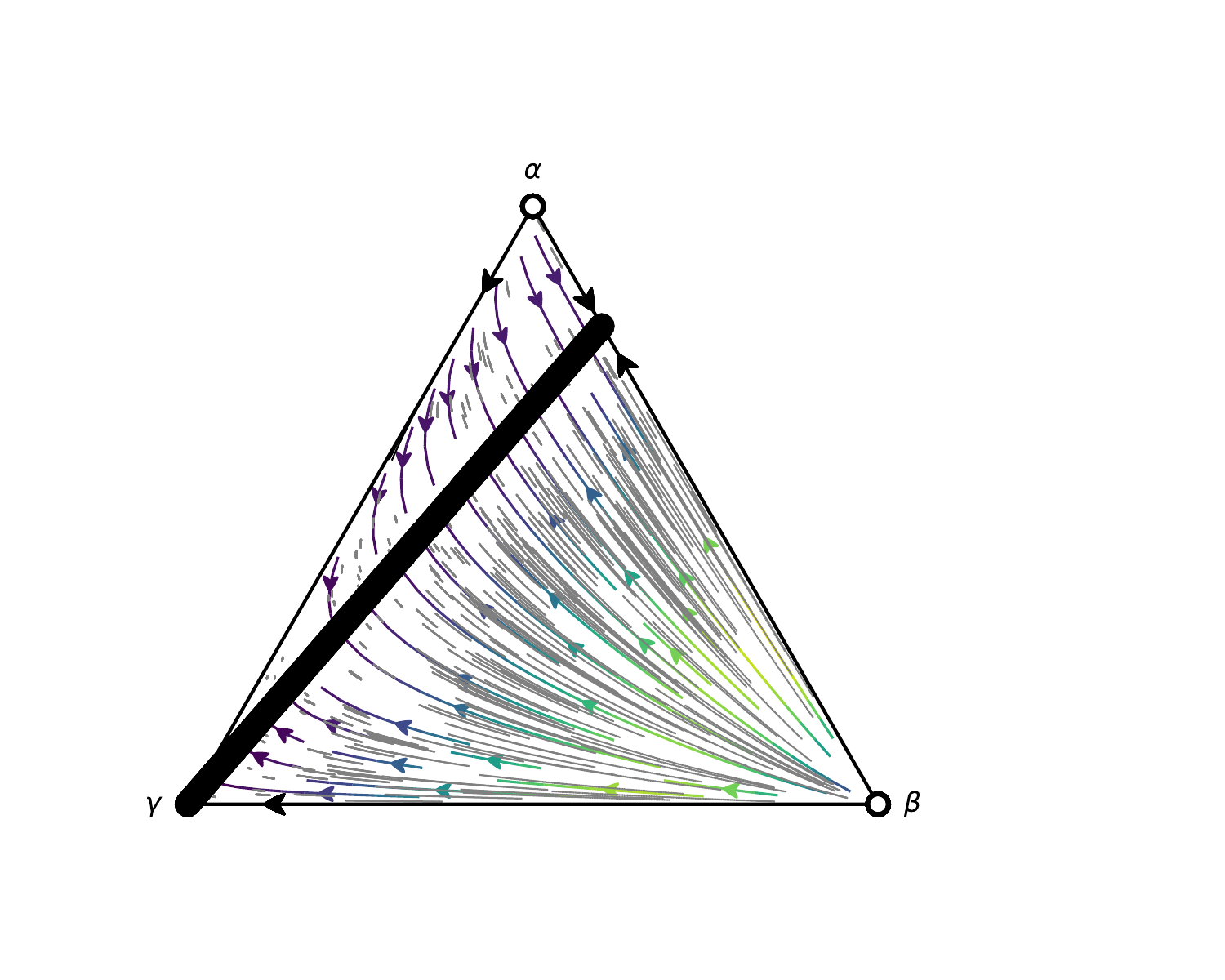}
    \includegraphics[scale=0.45,trim={3cm 3cm 4cm 3cm},clip]{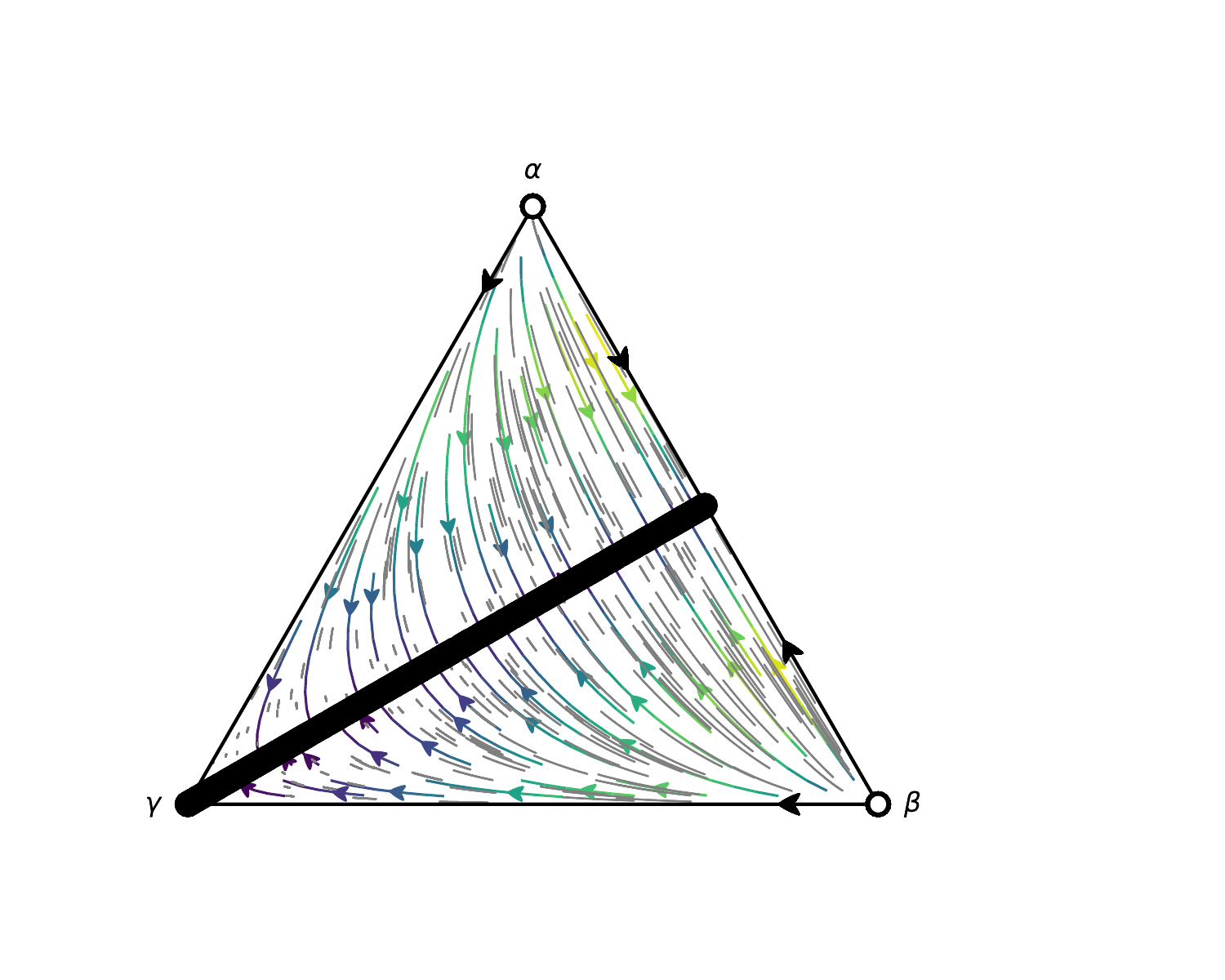}
    \caption{Phase-space of the replicator dynamics (see~\eqref{eq:replicator}) for $s=0.2,\delta=0.1$ (left) and $s=0.2,\delta=0$ (right). Light yellow stands for high gradient values, and deep purple for low. Figures were generated with software made by Fernández~\cite{fernandez2020}.}
    \label{fig:replicator_simplex}
\end{figure}

\section*{General Case}
\label{app:general}
In this section we provide proofs of propositions and theorems needed in the main manuscript (section named "$K$ Proposers and $L$ Responders"). In the first subsection 
we show that there exists a unique evolutionarily stable strategy for any offer combination, apart from all offers being zero. In the second subsection 
we prove that if responders play this evolutionarily stable strategy, then the proposers' equilibrium strategy must be symmetric.

We start by repeating the notation from the main manuscript. We consider $K\geq 2$ proposers and $L\geq 2$ responders. The strategy of each proposer that describes how much they offer to the responders is denoted as $s_i \in [0,1],$ $ {i \in \{ 1,2,\dots,K\}},$ and for every responder we allow a mixed strategy denoted:
\begin{equation*}
 p_{l,i}\geq 0,\ l \in \{ 1,2,\dots,L\}, \ i \in \{ 1,2,\dots,K\}, \text{ s.t.~for all $l$}: \sum_{i=1}^K p_{l,i} =1\ ,
\end{equation*}
which determines the probability of the responder $l$ choosing the proposer $i.$ Note that the responder may reject all offers with some probability. In this case, the sum of the visitation probabilities does not necessarily equal one. However, to identify Nash equilibria of responders in each subgame, we can exclude these strategies if at least one offer is greater than zero.
Let us denote $\Pi_{R,l}$ the expected payoff of responder $l.$ We call the following system of equations \textit{System R}:

For $l \in \{1,2,\dots, L\}$ and $i \in \{1,2,\dots,K-1\}:$
  \begin{equation}
\begin{aligned}
    &\frac{\partial \Pi_{R,l}}{\partial p_{li}} =  s_i W_{l,i}-s_K W_{l,K}\ ,\\
    &\text{where for all } m \in \{1,2,\dots,K\} :  
    &W_{l,m}=\sum_{j=1}^{L}  \frac{1}{j} \left[ \sum_{\alpha:\norm{\alpha}_{1}=j,\alpha_l=1} \prod_{k=1,k\neq l}^{L} p_{km}^{\alpha_k}(1-p_{km})^{1-\alpha_k}\right].
\end{aligned}
\label{eq:appendix_system_R}
\end{equation}

If all other responders apart from responder $L$ use the same strategy $p_i, i \in \{1,2,\dots,K\},$ where  $\sum_{i=1}^K p_i=1$ and $p_i\geq 0,$ then the derivatives of the payoff of responder $L$ can be simplified to \textit{System S}:

For $i \in \{1,2,\dots,K-1\}:$
  \begin{equation}\label{app:system_s}
  	\begin{aligned}
    &\frac{\partial \Pi_{R,L}}{\partial p_{Li}}= \frac{1}{L} (s_i f(p_i)-s_K f(p_K))\ ,\\
         & \text{where }    f: [0,1] \to \mathbb{R}, \quad
          f(x)=\frac{1-(1-x)^L}{x} \quad \text{and} \quad f(0)=L\ .
    \end{aligned}
\end{equation}

\subsection*{Evolutionarily Stable Strategy}\label{app:subs_ess_general}
In this section we prove Theorem~\ref{thm:ESS} which states that for any set of proposer offers (apart from all proposers offering zero) there exists a unique evolutionarily stable strategy of responders. We refer to the definition of evolutionarily stable strategy in multiplayer games by Broom et al.~\cite{broom2022}.
\begin{defin}\label{def:multi_ESS}
	Consider a multiplayer game with $m$ players. Strategy $A$ is evolutionarily stable against strategy $Y$ if and only if there is $j \in \{0,1,\dots,m-1\}$ such that
	\begin{equation*}
		\begin{aligned}
			\Pi(A;A^{m-1-j},Y^{j})&>  \Pi(Y;A^{m-1-j},Y^{j}), \\
			\Pi(A;A^{m-1-i},Y^{i})&=  \Pi(Y;A^{m-1-i},Y^{i}) \text{ for all } i<j,
		\end{aligned}
	\end{equation*}
 where $\Pi(A;X_1,X_2,\dots,X_{m-1})$ denotes payoff of player playing strategy $A$ against $m-1$ players playing strategies $X_i, i \in \{1,2,\dots,m-1\}$ and  $X^i$ denotes that $i$ players use the same strategy $X.$
Strategy $A$ is called an \textit{evolutionarily stable strategy at level $J$} if, for every $Y\neq A$ the conditions above are satisfied for some $j\leq J$ and there is at least one $Y\neq A$ for which the conditions are met for $j=J$ precisely.	
\end{defin}
First we prove two simple lemmas which will be needed for the proof of Theorem~\ref{thm:ESS}. In the first lemma we show some basic properties of a central function in the derivatives of responders' payoffs (\eqref{app:system_s}). 
\begin{lema}\label{lemma:fun_decreasing}
	Consider $L\geq 2$ and function 
	\begin{equation*}
		f: \mathbb{R} \to \mathbb{R}, \quad
		f(x)=\frac{1-(1-x)^L}{x} \quad \text{and} \quad f(0)=L \ .
	\end{equation*}
	Then $f$ is strictly decreasing, continuous and positive on $[0,1].$ 
\end{lema}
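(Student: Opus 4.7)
The plan is to rewrite $f$ in a form that makes all three claims essentially immediate. The key observation is the finite geometric series identity: setting $y = 1-x$, we have
\begin{equation*}
1 - y^L = (1-y)\sum_{k=0}^{L-1} y^k,
\end{equation*}
so after dividing by $x = 1-y$ we obtain, for $x\in(0,1]$,
\begin{equation*}
f(x) = \frac{1-(1-x)^L}{x} = \sum_{k=0}^{L-1}(1-x)^k.
\end{equation*}
The right-hand side is a polynomial in $x$, which extends the definition smoothly to all of $\mathbb{R}$ and in particular gives the value $\sum_{k=0}^{L-1}1 = L$ at $x=0$, matching the stated convention.

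From this polynomial form, continuity on $[0,1]$ is immediate. Positivity also follows at once: for $x\in[0,1]$ every summand $(1-x)^k$ is nonnegative and the $k=0$ term equals $1$, so $f(x)\ge 1>0$.

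For strict monotonicity I would argue termwise. For each $k\ge 1$, the function $x\mapsto(1-x)^k$ is strictly decreasing on $[0,1]$ (its derivative is $-k(1-x)^{k-1}<0$ on $[0,1)$, and it achieves a strictly smaller value at $x=1$ than at any earlier point). The $k=0$ term is the constant $1$. Since $L\ge 2$, there is at least one strictly decreasing summand, so $f$ itself is strictly decreasing on $[0,1]$.

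There is no real obstacle once the geometric-series rewriting is spotted; the only thing to be careful about is handling the endpoint $x=0$, where the original quotient is of indeterminate form. The polynomial identity bypasses this issue entirely, which is why I would use it rather than, say, differentiating the quotient directly or applying L'Hôpital at the boundary.
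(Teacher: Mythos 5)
Your proof is correct, but it takes a genuinely different route from the paper's. The paper establishes continuity at $0$ via l'H\^{o}pital, then computes the derivative $f'(x)=\frac{Lx(1-x)^{L-1}+(1-x)^L-1}{x^2}$ and invokes Lemma~\ref{lemma:minus_gi_decreasing} (itself proved by induction on $L$) to conclude that $-f'$ is positive, hence $f$ is decreasing; positivity then follows from $f(1)=1$. You instead apply the finite geometric series to write $f(x)=\sum_{k=0}^{L-1}(1-x)^k$, which turns $f$ into a polynomial and makes continuity, positivity ($f\geq 1$), and strict monotonicity (at least one strictly decreasing summand since $L\geq 2$) all immediate, with no need for l'H\^{o}pital or an auxiliary induction. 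Your argument is more elementary and self-contained for this lemma in isolation; the paper's detour through Lemma~\ref{lemma:minus_gi_decreasing} is not wasted effort in context, however, because the quantity $-f'$ reappears later (e.g.\ as the denominator of $g_i$ in Theorem~\ref{thm:unique_prop_nash} and Proposition~\ref{prop:second_der}), so the paper gets the sign and monotonicity of $f'$ itself as a reusable byproduct, which your decomposition does not directly provide.
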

\begin{proof}
	It is easy to show with l'H\^{o}pital rule that $\lim_{x \to 0} f(x)=L$ which gives us that the function $f(x)$ is continuous. Next, we want to show that the function's derivative is always negative on $(0,1]:$
	\[ f'(x)=\frac{L x(1 - x)^{L - 1} + (1 - x)^L - 1}{x^2}\ .\]
	As we prove in Lemma~\ref{lemma:minus_gi_decreasing} function $-f'$ is a positive function, meaning the derivative $f'$ is a negative function, thus $f$ is decreasing, it is also positive on $[0,1],$ since $f(1)$ is $1.$
\end{proof}
In the following lemma, we prove a positive definiteness of a specific matrix, needed for a proof of convexity in Theorem~\ref{thm:ESS}.
\begin{lema}\label{lemma:positive_definite}
Consider $K\times K$ symmetric matrix $H$ with off-diagonal elements $H_{ij}=C, C>0$ for $i \neq j$ and diagonal elements $H_{ii}=C+\varepsilon_i,$ where $\varepsilon_i> 0$ for all $i \in \{1,2,\dots, K\}.$ Then $H$ is positive-definite.
\end{lema}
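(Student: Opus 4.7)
The plan is to exhibit $H$ as the sum of a positive semi-definite rank-one matrix and a positive-definite diagonal matrix, and then read off positive definiteness from the resulting quadratic form. Concretely, I would write
\[ H = C\, \mathbf{1}\mathbf{1}^T + D, \qquad D=\mathrm{diag}(\varepsilon_1,\ldots,\varepsilon_K), \]
where $\mathbf{1}=(1,1,\ldots,1)^T \in \mathbb{R}^K$. This decomposition is immediate from inspecting the entries: the off-diagonal entries are all $C$ and the diagonal entries are $C+\varepsilon_i$.

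Next, I would test an arbitrary nonzero $x \in \mathbb{R}^K$ against $H$. Using the decomposition,
\[ x^T H x \;=\; C\, (x^T \mathbf{1})^2 + x^T D x \;=\; C \Bigl(\sum_{i=1}^K x_i\Bigr)^{\!2} + \sum_{i=1}^K \varepsilon_i\, x_i^2. \]
The first term is non-negative because $C>0$ and it is a square. The second term is strictly positive whenever $x\neq 0$ because all $\varepsilon_i>0$ and at least one $x_i$ is nonzero. Hence $x^T H x > 0$ for every nonzero $x$, which is the definition of positive definiteness.

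There is essentially no obstacle here; the only thing to be careful about is to note that $C(x^T \mathbf{1})^2$ can vanish (on the hyperplane $\sum_i x_i = 0$), so strict positivity must come from the diagonal part $\sum_i \varepsilon_i x_i^2$. Since each $\varepsilon_i>0$, this sum is a positive-definite quadratic form on its own, which already suffices; the rank-one piece only helps. If desired, one could alternatively invoke Sylvester's criterion or compute the determinant via the matrix determinant lemma, but the direct quadratic-form argument is the shortest route and fits the way $H$ will be used later (bounding a Hessian to establish convexity in Theorem~\ref{thm:ESS}).
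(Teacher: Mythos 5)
Your proof is correct, and it is in fact cleaner than the one in the paper. You split $H$ at the level of matrices, $H = C\,\mathbf{1}\mathbf{1}^T + \mathrm{diag}(\varepsilon_1,\dots,\varepsilon_K)$, so that $x^THx = C\bigl(\sum_i x_i\bigr)^2 + \sum_i \varepsilon_i x_i^2$ in a single step, and strict positivity for $x\neq 0$ is immediate from the diagonal part alone. The paper instead expands $x^THx$ entrywise and then argues by cases on the sign of the cross term $\sum_{i\neq j} C x_i x_j$: when it is non-negative the claim is immediate, and when it is negative the paper introduces $\varepsilon_{\min}=\min_i \varepsilon_i$ and regroups the expression around $(C+\varepsilon_{\min})\bigl(\sum_i x_i\bigr)^2$ so that the leftover term $-\varepsilon_{\min}\sum_{i\neq j}x_ix_j$ is strictly positive. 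Both arguments rest on the same identity $\bigl(\sum_i x_i\bigr)^2 = \sum_i x_i^2 + \sum_{i\neq j} x_i x_j$, but your rank-one-plus-diagonal decomposition removes the need for any case distinction and makes the source of strict positivity (the $\varepsilon_i$'s) transparent. You also correctly flag the only subtlety, namely that the rank-one piece vanishes on the hyperplane $\sum_i x_i = 0$, which is precisely why the strictly positive diagonal part is essential.
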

\begin{proof}
	Matrix $H$ is said to be positive-definite if 
	\begin{equation}\label{def:positive_definite}
		x^THx > 0 \text{ for any vector } 0\neq  x \in \mathbb{R}^K\ .
	\end{equation}
	We have that
	\begin{equation}\label{eq:xhx}
 x^THx= \sum_{i=1}^K (C+\varepsilon_i) x_i^2+  \sum_{i,j=1,i\neq j}^K C x_i x_j\ .	    
	\end{equation}
	It is clear that if $\sum_{i,j=1,i\neq j}^K C x_i x_j \geq 0$ the condition in~\eqref{def:positive_definite} is satisfied. Now, let us assume $\sum_{i,j=1,i\neq j}^K C x_i x_j < 0$ and set $\varepsilon_{min}=\min_{i\in \{1,2,\dots, K\}} \varepsilon_i.$ Then we can rewrite~\eqref{eq:xhx} as
	\begin{equation*}
	x^THx= (C+\varepsilon_{min})\left[\left(\sum_{i=1}^K x_i\right)^2- \sum_{i,j=1,i\neq j}^K x_i x_j \right] + \sum_{i=1}^K (\varepsilon_i-\varepsilon_{min})x_i^2 +\sum_{i,j=1,i\neq j}^K C x_i x_j \ ,
    \end{equation*}
	which is positive since it equals
	\[(C+\varepsilon_{min})\left(\sum_{i=1}^K x_i\right)^2  + \sum_{i=1}^K (\varepsilon_i-\varepsilon_{min})x_i^2 -\varepsilon_{min}\sum_{i,j=1,i\neq j}^K  x_i x_j\ , \]
	and we have $\sum_{i,j=1,i\neq j}^K C x_i x_j < 0 \ .$   
\end{proof}
Finally, we prove the main theorem of this subsection which states that for any offer combination (apart from all offers being zero) there exists a unique evolutionarily stable strategy of responders.
\begin{theorem}\label{thm:ESS}Consider MPMR UG with $K\geq 2$ proposers and $L\geq 2$ responders.  
Consider a set of offers $s_K\geq s_{K-1}\geq \dots \geq s_1,$ where $s_K \in (0,1]$ and $s_1 \geq 0.$ Then for each given set of offers, there exists a unique evolutionarily stable strategy at level $1$ of responders, implicitly defined for all $i \in \{1,2,\dots,K-1\}, p_i\geq 0$ such that $\sum_{i=1}^K p_i=1 $ and $p_K>0$ as:
	\begin{equation}
		\begin{aligned}
			&p_i=0   \ &\text{ and } \  s_i f(p_i)-s_K f(p_K)&\leq 0\ ,  \\
   &or\\
		   &p_i>0  \    &\text{ and }  \   s_i f(p_i)-s_K f(p_K)&=0  \hspace{0.5em}  \Rightarrow  \hspace{0.5em} p_i=f^{-1}\left(\frac{s_K}{s_i}f(p_K)\right), 
		\end{aligned}\label{app:eq_eq_rules}
	\end{equation}
 where  $f: [0,1] \to \mathbb{R}, f(x)=\frac{1-(1-x)^L}{x}$ and $ f(0)=L.$
\end{theorem}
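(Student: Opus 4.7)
The plan is to read eq.~(\ref{app:eq_eq_rules}) as a Karush--Kuhn--Tucker (KKT) characterization of a symmetric Nash equilibrium of the responder subgame, then verify both ESS conditions of Definition~\ref{def:multi_ESS} at level $J=1$.

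\textbf{Step 1 (KKT characterization).} Suppose $L-1$ responders play the common mixed strategy $A=(p_1,\dots,p_K)$ and the last responder deviates. By System~S (eq.~(\ref{app:system_s})), the constrained marginal payoff in direction $p_i$ equals $\tfrac{1}{L}(s_i f(p_i)-s_K f(p_K))$. Writing the KKT stationarity/complementarity conditions on the simplex $\{\sum_i p_i=1,\,p_i\ge 0\}$ produces exactly eq.~(\ref{app:eq_eq_rules}). Thus the theorem's conditions characterize the symmetric best-reply-to-itself profiles, i.e.\ the symmetric Nash equilibria of the subgame.

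\textbf{Step 2 (Existence and uniqueness of $A$).} Lemma~\ref{lemma:fun_decreasing} gives $f\colon[0,1]\to[1,L]$ continuous, strictly decreasing, and positive, hence invertible on its range. I parameterize by $p_K\in(0,1]$ and set $p_i(p_K)=f^{-1}(s_K f(p_K)/s_i)$ whenever $s_K f(p_K)/s_i\in[1,L]$, and $p_i(p_K)=0$ otherwise. Since both $f$ and $f^{-1}$ are strictly decreasing, each $p_i(p_K)$ is continuous and monotone in $p_K$; consequently $\Sigma(p_K)=\sum_{i=1}^{K}p_i(p_K)$ is continuous and strictly monotone, and the intermediate value theorem yields a unique $p_K\in(0,1]$ with $\Sigma(p_K)=1$. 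The hypothesis $s_K>0$ rules out $p_K=0$.

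\textbf{Step 3 (ESS at level~$1$).} By multilinearity of $\Pi$ in the $L$ responder arguments, setting $D=A-Y$ and expanding $Y=A-D$ in the final slot yields
\begin{equation*}
    \Pi(A;A^{L-2},Y)-\Pi(Y;A^{L-2},Y) \;=\; \Pi(D;A^{L-1}) \;-\; \Pi(D;A^{L-2},D).
\end{equation*}
The first term is nonnegative since $A$ is a Nash equilibrium. If $Y$ admits a first-order improvement direction against $A^{L-1}$, this term is strictly positive and the $j=0$ strict ESS inequality already covers this $Y$. Otherwise $\Pi(D;A^{L-1})=0$, and I need $\Pi(D;A^{L-2},D)<0$. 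A direct computation expresses this symmetric bilinear form as $-D^{T}HD$, where $H$ has a constant off-diagonal entry $C>0$ coming from the ``collision'' contribution when the focal responder and the lone deviant visit the same proposer, and strictly positive diagonal increments traced to the strictly decreasing $f$ appearing in $W_{l,m}$. Lemma~\ref{lemma:positive_definite} then gives $D^{T}HD>0$ for every nonzero $D$ with $\sum_i D_i=0$, yielding the required strict inequality.

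\textbf{Main obstacle.} The crux is the combinatorial bookkeeping in Step~3: correctly assembling the bilinear collision form $\Pi(D;A^{L-2},D)$ from the $W_{l,m}$ sums and matching it to the ``$C$ off-diagonal plus strictly positive diagonal increments'' pattern that triggers Lemma~\ref{lemma:positive_definite}. Cleanly separating the $Y$'s that tie $A$ in Nash payoff (requiring the level-$1$ quadratic argument) from those that already strictly underperform at level~$0$ is the subtlest point; once these cases are both in hand, uniqueness of the ESS at level~$1$ is automatic from the uniqueness established in Step~2.
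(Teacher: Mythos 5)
Your proposal is correct in outline and follows essentially the same route as the paper: the same level-$0$/level-$1$ split from Definition~\ref{def:multi_ESS}, the same dichotomy between deviations $Y$ that already lose strictly against $A^{L-1}$ and those requiring the quadratic (level-$1$) argument, and the same appeal to Lemma~\ref{lemma:positive_definite} to certify positive definiteness. Two differences are worth noting. First, your Step~2 actually supplies the existence/uniqueness argument for the profile $A$ (monotonicity of $p_i(p_K)$ plus the intermediate value theorem), which the paper merely outsources to the main manuscript; that is a genuine addition. Second, your multilinearity identity $\Pi(A;A^{L-2},Y)-\Pi(Y;A^{L-2},Y)=\Pi(D;A^{L-1})-\Pi(D;A^{L-2},D)$ is a cleaner packaging of what the paper obtains by showing that $\Pi_A-\Pi_Y$ is a quadratic in $q$ with vanishing gradient at $q=p$ and constant Hessian. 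The only real shortfall is the one you flag yourself: the structure of the collision form is asserted, not computed. In the paper this is the bulk of the proof, via the explicit quantities $A_{1,L}(i)$ (expected share given the deviant collides) and $B_{1,L}(i)$ (given it does not), the Hessian entries $-2s_i(A_{1,L}(i)-B_{1,L}(i))-2s_K(A_{1,L}(K)-B_{1,L}(K))$ and $-2s_K(A_{1,L}(K)-B_{1,L}(K))$, and the inequality $A_{1,L}(i)<B_{1,L}(i)$. You would also need to note, as the paper does, that the strict diagonal increments require $s_i>0$, which holds on the support after discarding the inactive proposers. One simplification you missed: in the full $K$-coordinate representation the quadratic part is diagonal, $\Pi(D;A^{L-2},D)-\text{(linear part)}=\sum_i s_i D_i^2\left(A_{1,L}(i)-B_{1,L}(i)\right)$, so its negativity is immediate once $A_{1,L}(i)<B_{1,L}(i)$ is known; the constant-off-diagonal structure and Lemma~\ref{lemma:positive_definite} only enter because the paper eliminates $q_K$ via the simplex constraint. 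With the explicit computation of $A_{1,L}(i)$ and $B_{1,L}(i)$ filled in, your argument closes.
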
 
\begin{proof} Let us denote the solution $p_i$ of~\eqref{app:eq_eq_rules} as strategy $A.$ As was shown in the analysis in "$K$ Proposers and $L$ Responders", such solution always exists and is unique for any set of offers (apart from pure zero offers). 
First, let us assume we have a set of offers that results in strategy $A$  following the equivalence for all $i: s_i f(p_i) - s_K f(p_K) = 0. $
Now refer to the previous Definition~\ref{def:multi_ESS}, for $j=0$ we have to show that
	\[\Pi(A;A^{L-1}) =  \Pi(Y;A^{L-1})\ .\]
This is evident from the fact that the derivatives of the first responder's payoff with respect to their strategies, \(\frac{\partial \Pi_{R,1}}{\partial p_{1i}}\), are independent of their strategy $p_{1i}$ and equal to zero when all other players use strategy $A$ (see \textit{System $S$} in \eqref{app:system_s}).

Next, we look at $j=1.$ We need to show that \[\Pi(A;A^{L-2},Y) >  \Pi(Y;A^{L-2},Y)\ .\] 
Let us denote the strategy $Y$ as $q_i,i\in\{1,2,\dots, K\}.$
We may rewrite the payoffs of the players using strategy $A$ and $Y$ (denoted $\Pi_A$ and $\Pi_Y$ respectively) as:
	\begin{equation*}
		\begin{aligned}
			\Pi_A&=\sum_{i=1}^K s_i p_i\left[ q_i A_{1,L}(i)+(1-q_i)B_{1,L}(i) \right]\ ,    \\    
			\Pi_Y&=\sum_{i=1}^K s_i q_i \left[ q_i A_{1,L}(i)+(1-q_i)B_{1,L}(i) \right]\ ,
		\end{aligned}
	\end{equation*}
	where 
	\begin{equation*}
		\begin{aligned}
			A_{1,L}(i)&= \sum_{j=2}^{L} \frac{1}{j}  \sum_{\alpha:\norm{\alpha}_{1}=j,\alpha_1=1,\alpha_L=1} \quad \prod_{k\neq 1,L}p_{i}^{\alpha_k}(1-p_{i})^{1-\alpha_k}\ ,  \\ 
			B_{1,L}(i) &= \sum_{j=1}^{L-1} \frac{1}{j}  \sum_{\alpha:\norm{\alpha}_{1}=j,\alpha_1=1,\alpha_L=0} \quad \prod_{k\neq 1,L} p_{i}^{\alpha_k}(1-p_{i})^{1-\alpha_k}\ ,
		\end{aligned}
	\end{equation*} 
	and if number of responders is two then 
	\[A_{1L}(i)=\frac{1}{2} \quad \text{and} \quad B_{1L}(i)=1\ .\]
	From this we have
	\begin{equation*}
		\begin{aligned}
			\Pi_A-\Pi_Y&=\sum_{i=1}^K s_i (p_i-q_i)\left[ q_i A_{1,L}(i)+(1-q_i)B_{1,L}(i) \right] .
		\end{aligned}
	\end{equation*}
	Then $\Pi_A-\Pi_Y=0$ for $q=p.$
	From the way the strategy $A$ is defined we have that
	\begin{equation}\label{eq:deriv_zero}
		\begin{aligned}
			s_i \left[ p_i A_{1,L}(i)+(1-p_i)B_{1,L}(i) \right]  -s_K\left[ p_K A_{1,L}(K)+(1-p_K)B_{1,L}(K) \right]=0\ .
		\end{aligned}
	\end{equation}
	In the following we will show that the point $q=p$ is a global minimum of $\Pi_A-\Pi_Y.$ We start with the derivatives, remember $q_K=1-\sum_{i=1}^{K-1}q_i$:
	\begin{equation*}
		\begin{aligned}
			\frac{\partial (\Pi_A-\Pi_Y)}{\partial q_i}&=
			s_i\left[-B_{1,L}(i)+(p_i-2q_i)(A_{1,L}(i)-B_{1,L}(i)) \right]\\
			&-s_K\left[-B_{1,L}(K)+(p_K-2q_K)(A_{1,L}(K)-B_{1,L}(K)) \right]\ .
		\end{aligned}
	\end{equation*}
			For the derivative of the payoff difference where $p=q$ we get:
			\begin{equation*}\label{eq:derivative_p_eq_q}
				\begin{aligned}
					\frac{\partial (\Pi_A-\Pi_Y)}{\partial q_i}\bigg|_p					&=s_i\left[-B_{1,L}(i)-p_i(A_{1,L}(i)-B_{1,L}(i)) \right]\\
					&-s_K\left[-B_{1,L}(K)-p_K(A_{1,L}(K)-B_{1,L}(K)) \right]\ ,
				\end{aligned}
			\end{equation*}
			which is clearly equal to zero since equation~\eqref{eq:deriv_zero} is equal to zero.
			Next, we look at the second derivatives
			\begin{equation}
				\begin{aligned}
					\frac{\partial^2 (\Pi_A-\Pi_Y)}{\partial q_i q_i}
					=-2s_i(A_{1,L}(i)-B_{1,L}(i)) 
					-2s_K(A_{1,L}(K)-B_{1,L}(K))\ ,
				\end{aligned}\label{eq:hessian_qi}
			\end{equation}
			and for $i\neq j$
			\begin{equation}
				\begin{aligned}
					\frac{\partial^2 (\Pi_A-\Pi_Y)}{\partial q_i q_j}
					= -2s_K(A_{1,L}(K)-B_{1,L}(K)) \ .
				\end{aligned}\label{eq:hessian_qiqj}
			\end{equation}
			We know that all the elements of the Hessian matrix~\eqref{eq:hessian_qi} and~\eqref{eq:hessian_qiqj} are positive, since $A_{1,L}(i)-B_{1,L}(i)<0$ for all $i \in \{1,2,\dots, K\}$ and also all $s_i$ must be positive (since we consider the equivalence case for all $i$).
			Thus, the Hessian is symmetric with equal positive off-diagonal entries and diagonal entries $H_{ii}> H_{jk}$ for all $i,j,k \in \{1,2,\dots, K-1\}.$ It is easy to prove that such matrix is positive definite~(see Lemma~\ref{lemma:positive_definite}). The second derivative is also independent of $q$ and thus we see that the function  $\Pi_A-\Pi_Y$  is strictly convex in $q.$ This means that the minimum at $q=p$ is also a unique global minimum. From this it follows that 
			\[\Pi_A=\Pi(A;A^{L-2},Y) >  \Pi(Y;A^{L-2},Y)=\Pi_Y\ .\]
			Thus, we have shown that strategy $A$ is a unique ESS in the equivalence to zero case.

			Next, let us assume that not all derivatives are equal to zero, but there are some $i \in I,$  where $I \subseteq \{1,2,\dots,K-1\}$ for which the derivative is negative and therefore $p_i=0.$ Now, there are two groups of strategies $Y$ we have to look at. In the first group there are strategies $\{q_i\}_{i=1}^K$ such that $q_i=0, $ for all $i \in I.$ 
			In this case we can discard all the offers $s_i$ for $i \in I$ and consequently prove the evolutionary stability in the same way as in the previous part of the proof.
			
			In the second group, there are strategies for which $q_i\neq 0$ for some $i \in I.$ Then, due to the form of the derivatives we must have already for $j=0$ 
			\[\Pi(A;A^{L-1})>\Pi(Y;A^{L-1})\ .\] 
\end{proof}
\subsection*{Proposers' Subgame-Perfect Nash Strategy}\label{app:subs_symmetry}
In the following subsection, we prove an important result necessary for the analysis of the general case (see "$K$ Proposers and $L$ Responders" in the main manuscript), which states that if responders follow their evolutionarily stable strategy, the proposers' Nash equilibrium must be symmetric. Before proving the theorem, we introduce two lemmas that demonstrate properties of two functions essential for the proof of Theorem~\ref{thm:unique_prop_nash} and Proposition~\ref{prop:second_der}, where we prove that the second derivative of the payoff at the proposer's equilibrium is negative.
\begin{lema}\label{lemma:minus_gi_decreasing}
	Consider $L\geq 2$ and function 
	\begin{equation*}
		f: \mathbb{R} \to \mathbb{R}\ , \
		f_L(x)=\frac{1-(1-x)^{L}-Lx(1-x)^{L-1}}{x^2} \text{ and } f_L(0) = \frac{L(L-1)}{2}\ .
	\end{equation*}
	Then $f$ is continuous, positive and constant function for $L=2,$ and decreasing function for $L>2$ on interval $[0,1].$ 
\end{lema}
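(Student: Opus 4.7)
The plan is to reduce $f_L$ to a polynomial in $(1-x)$ with manifestly nonnegative coefficients, from which all three claims — continuity, positivity, and the dichotomy between constancy ($L=2$) and strict decrease ($L>2$) — can be read off almost by inspection. The main algebraic tool is the finite geometric identity $1-y^L = (1-y)\sum_{k=0}^{L-1} y^k$ applied with $y = 1-x$, which gives $1-(1-x)^L = x\sum_{k=0}^{L-1}(1-x)^k$.

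First I would substitute this identity into the numerator of $f_L$ to cancel one factor of $x$, obtaining $f_L(x) = x^{-1}\bigl[\sum_{k=0}^{L-1}(1-x)^k - L(1-x)^{L-1}\bigr]$. I would then regroup the bracket as $\sum_{k=0}^{L-2}(1-x)^k\bigl[1-(1-x)^{L-1-k}\bigr]$ and apply the same geometric identity a second time to each inner factor, producing the second power of $x$ needed to cancel the remaining $x^{-1}$. Reindexing the resulting double sum by $m = k+j$ (using that the number of pairs $(k,j)$ with $k,j\geq 0$ and $k+j = m$ is $m+1$) should yield the clean closed form
\begin{equation*}
f_L(x) = \sum_{m=0}^{L-2}(m+1)(1-x)^m.
\end{equation*}

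With this closed form in hand the three claims become routine. Continuity on $[0,1]$ is automatic since $f_L$ is now visibly a polynomial; evaluating at $x=0$ yields $\sum_{m=0}^{L-2}(m+1) = L(L-1)/2$, matching the declared value and justifying the extension. Positivity on $[0,1]$ is immediate because every coefficient $m+1$ is positive and the constant term is $1$. For $L=2$ the sum collapses to the single term $m=0$, giving $f_2 \equiv 1$, hence constancy. For $L>2$ I would differentiate termwise to obtain $f_L'(x) = -\sum_{m=1}^{L-2}m(m+1)(1-x)^{m-1}$, which is strictly negative on the entire interval $[0,1]$ (note that the $m=1$ contribution is the constant $-2$, so even the boundary point $x=1$ is covered), yielding strict monotonic decrease.

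The only genuine obstacle is the bookkeeping in the double-sum reindexing: I would need to verify carefully that after the second application of the geometric identity every term of the form $(1-x)^m$ is collected with multiplicity exactly $m+1$, rather than off-by-one. Everything else in the argument is purely formal once that closed form is established.
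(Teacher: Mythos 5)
Your proof is correct, and the closed form you derive is right: carrying out the double application of the geometric identity and the reindexing by $m=k+j$ does give $f_L(x)=\sum_{m=0}^{L-2}(m+1)(1-x)^m$ (the count of pairs $(k,j)$ with $k,j\geq 0$ and $k+j=m\leq L-2$ is indeed $m+1$, and the range constraints $k\leq L-2$, $j\leq L-2-k$ are automatically satisfied), as one can confirm for $L=3$ where $f_3(x)=3-2x=1+2(1-x)$. The paper instead proceeds by induction on $L$, establishing the recursion $f_{N+1}(x)=f_N(x)+N(1-x)^{N-1}$ with base case $f_2\equiv 1$, and concluding monotonicity from the fact that each increment is decreasing and positivity from $f_L(1)=1$. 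The two arguments are really two presentations of the same fact — telescoping the paper's recursion from $N=2$ to $N=L-1$ yields exactly your polynomial $1+\sum_{N=2}^{L-1}N(1-x)^{N-1}$ — but your explicit closed form buys slightly more: positivity is immediate from the constant term $1$ with all coefficients nonnegative (no appeal to monotonicity needed), and strict decrease for $L>2$ follows from the uniform bound $f_L'(x)\leq -2$ on all of $[0,1]$, which is cleaner than arguing about sums of non-increasing and decreasing functions at the endpoint. The paper's induction avoids the double-sum bookkeeping you flag as the one delicate step, at the cost of having to verify the algebraic identity $f_{N+1}-f_N=N(1-x)^{N-1}$ by direct computation.
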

\begin{proof}
	Continuity is trivial to show. We will prove the other properties by mathematical induction.
	We start with $L=2$
	\begin{equation*}
		\begin{aligned}
			f_2(x)= \frac{1-(1-x)^{2}-2x(1-x)}{x^2}= 
			1\ ,
		\end{aligned}
	\end{equation*}
	$f_2$ is a constant function which is non-increasing.
	Now, we assume that for $f_N$ is non-increasing and look at $L=N+1,$ after some manipulations we arrive to:
	\begin{equation*}
		\begin{aligned}
			f_{N+1}(x)&
			&=f_N(x)+N(1-x)^{N-1}\ ,
		\end{aligned}
	\end{equation*}
	since $f_{N+1}$ is a sum of a non-increasing functions and a decreasing function, for $L>2,$ $f_L$ is decreasing. Since it is decreasing, we know that minimum on the interval $[0,1]$ is at $1$ where $f_L(1)=1.$ Thus $f_L$ is a positive function.
\end{proof}
\begin{lema}\label{lemma_ai_decreasing}
	Consider $L\geq 2$ and functions 
	\begin{equation*}
		f: \mathbb{R} \to \mathbb{R}\ , \
		f_L(x)= \frac{Lx^2(1-x)^{L-1}}{1-(1-x)^{L}-Lx(1-x)^{L-1}}\ , \ f_L(0) =\frac{2}{L-1}\ ,
	\end{equation*}
	Then $f_L$ is decreasing function on $(0,1]$ for $L>1.$ It is also positive on $[0,1)$ and $f(1)=0.$
\end{lema}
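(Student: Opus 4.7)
The plan is to simplify $f_L$ by a polynomial factorization of the denominator and then verify monotonicity from the positivity of coefficients in an explicit polynomial identity, rather than wrestling with the raw expression.

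First I would substitute $u = 1-x$ to rewrite the denominator as
\[ q(u) := 1-(1-x)^L - Lx(1-x)^{L-1} = 1 - Lu^{L-1} + (L-1)u^L. \]
The key observation is that $q$ has a double zero at $u = 1$, and in fact $q(u) = (1-u)^2\, r(u)$ with the explicit form
\[ r(u) := 1 + 2u + 3u^2 + \cdots + (L-1)u^{L-2} = \sum_{j=0}^{L-2}(j+1)u^j. \]
This is transparent by differentiating the finite geometric series: $\frac{d}{du}\frac{1-u^L}{1-u}$ equals $r(u)$ on the one hand and $q(u)/(1-u)^2$ on the other. Since the numerator of $f_L$ carries a factor $x^2 = (1-u)^2$, these cancel and we are left with the clean form
\[ f_L(x) = \frac{L\, u^{L-1}}{r(u)}, \qquad u = 1-x. \]

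Next I would read off the boundary data and positivity directly: $u^{L-1}$ and $r(u)$ are strictly positive on $(0,1)$, so $f_L > 0$ on $[0,1)$; the value $r(1) = \sum_{j=1}^{L-1} j = L(L-1)/2$ combined with $u|_{x=0}=1$ gives $f_L(0) = 2/(L-1)$, while $u|_{x=1}=0$ gives $f_L(1)=0$.

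For monotonicity, since $u = 1-x$ is a strictly decreasing change of variable, it suffices to show that $\tilde f_L(u) := Lu^{L-1}/r(u)$ is strictly increasing on $(0,1)$. A direct differentiation gives
\[ \tilde f_L'(u) = \frac{L\, u^{L-2}\bigl[(L-1)r(u) - u\, r'(u)\bigr]}{r(u)^2}, \]
and a term-by-term computation yields the identity
\[ (L-1)\, r(u) - u\, r'(u) = \sum_{j=0}^{L-2}(j+1)(L-1-j)\, u^j. \]
Every coefficient $(j+1)(L-1-j)$ is strictly positive for $j \in \{0,1,\dots,L-2\}$ and $L \geq 2$, so $\tilde f_L'(u) > 0$ on $(0,1)$, hence $f_L$ is strictly decreasing on $(0,1)$; together with $f_L(x) > 0 = f_L(1)$ for $x \in (0,1)$ this upgrades to strict decrease on $(0,1]$. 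The only real obstacle in the plan is spotting the factorization $q(u) = (1-u)^2 r(u)$ and the explicit form of $r$; once that is in hand, both the boundary values and monotonicity reduce to reading off manifestly positive coefficients.
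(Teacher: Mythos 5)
Your proof is correct, and it takes a genuinely different route from the paper's. The paper differentiates $f_L$ directly, reduces the sign of $f_L'$ to the polynomial inequality $(-Lx+x-2)(1-x)^L - Lx - x + 2 < 0$, and establishes that inequality by induction on $L$. You instead substitute $u=1-x$ and exploit the identity $1-Lu^{L-1}+(L-1)u^L = (1-u)^2\sum_{j=0}^{L-2}(j+1)u^j$, obtained by differentiating the geometric series, to cancel the $x^2=(1-u)^2$ factor and reach the closed form $f_L = Lu^{L-1}/r(u)$. From there positivity, the boundary values $f_L(0)=2/(L-1)$ and $f_L(1)=0$, and strict monotonicity all follow from manifestly positive coefficients in $(L-1)r(u)-ur'(u)=\sum_{j=0}^{L-2}(j+1)(L-1-j)u^j$; I verified each of these identities and they are correct. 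What your approach buys is the elimination of the induction and an explicit verification of the value $f_L(0)=2/(L-1)$, which the paper only asserts; what the paper's approach buys is that it works directly on the given expression without requiring one to spot the factorization of the denominator. Either argument fully establishes the lemma.
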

\begin{proof} 
	We will prove that the derivative of the function $f_L'(x)$ is negative on $(0,1]$ for any $L.$
	\[f_L'(x)=\frac{L x (1 - x)^{L-2} ((-L x + x - 2) (1 - x)^L - L x - x + 2)}{(1-(1-x)^{L}-Lx(1-x)^{L-1})^2}\ .\]
	We see it is sufficient to show that
	\[(-L x + x - 2) (1 - x)^L - L x - x + 2<0\ .\]
	We prove it by mathematical induction. For $L=2$ we have
	\[f_2'(x)=-x^3\ ,\]
	which is negative on $(0,1].$
	Now, we assume it holds for $L=N:$ 
	\[f_{N}'(x)=(-N x + x - 2) (1 - x)^N - N x - x + 2<0\ ,\]
	notice we have that
	\[(1-x)f_{N}'(x)=(-N x + x - 2) (1 - x)^{N+1}- N x - x + 2 +Nx^2+x^2-2x<0\ .\]
	When we look at $L=N+1$ we can rewrite $f'_{N+1}$ as:
	\begin{equation*}
		\begin{aligned}
			f_{N+1}'(x)&=(-(N+1) x + x - 2) (1 - x)^{N+1} - (N+1) x - x + 2
			= (1-x)f_N'(x)+x\left(1-(N+1)x-(1-x)^{N+1}\right)\ .
		\end{aligned}
	\end{equation*}
	Function $r(x)=1-(N+1)x-(1-x)^{N+1}$ is non-positive. At zero we have $r(0)=0$ and then the derivative is clearly negative for $x\in (0,1].$ 
	Thus, $f'_{N+1}(x)$ is a sum of two negative functions and therefore it is also negative. This means $f_L$ is a decreasing function on $(0,1].$ Additionally, $f_L(1)=0$ and therefore $f_L$ is positive on $[0,1).$
\end{proof}
After introducing the lemmas, we can proceed with the main theorem, which states that under the responders' evolutionarily stable strategy, the subgame-perfect Nash equilibrium must be symmetric.
\begin{theorem}\label{thm:unique_prop_nash}
Consider a MPMR UG with ${K\geq 2}$ proposers and $L \geq 2$ responders. If all responders follow the evolutionarily stable strategy described above and all offers are greater than zero, then the subgame-perfect Nash equilibrium of proposer strategies must be symmetric i.e.~ for all $i,j \in \{1,2,\dots,K\}:$ $s_i=s_j.$  
\end{theorem}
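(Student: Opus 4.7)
The plan is to reduce the Nash conditions among the proposers to a single implicit equation in each $p_i$ and then use the monotonicity properties from Lemmas~\ref{lemma:minus_gi_decreasing} and~\ref{lemma_ai_decreasing} to force all $p_i$ (and hence all $s_i$) to agree. The steps are: (i) argue every equilibrium $p_i$ is positive; (ii) use the ESS indifference to rewrite each proposer's payoff cleanly in terms of $p_i$ and a common constant $\lambda$; (iii) derive the proposer's first-order condition and observe that it takes the same functional form for every $i$; (iv) deduce via monotonicity that the common FOC admits a unique solution, making all $p_i$ equal.

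For step (i), I would first suppose some $p_i = 0$ at a Nash equilibrium with all $s_j > 0$. Then proposer $i$ earns zero and can profitably deviate to $s_i' := s_K$: by Theorem~\ref{thm:ESS}, the indifference $s_i' f(p_i') = s_K f(p_K)$ forces $p_i' > 0$, and the payoff $(1-s_K)(1-(1-p_i')^L)$ is strictly positive, contradicting Nash. Hence every $p_i > 0$, and Theorem~\ref{thm:ESS} supplies a single constant $\lambda > 0$ with $s_i f(p_i) = \lambda$ for every $i$.

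For steps (ii) and (iii), substituting $s_i(1-(1-p_i)^L) = \lambda p_i$ into the proposer payoff gives the clean form $\Pi_{P,i} = (1-s_i) b(p_i) = b(p_i) - \lambda p_i$ with $b(p) := 1-(1-p)^L$. Because the map $s_i \mapsto p_i$ is strictly monotone (Lemma~\ref{lemma:fun_decreasing}), one may treat $p_i$ as proposer $i$'s decision variable; then $\lambda$ is determined implicitly from the constraint $\sum_j p_j = 1$ together with $s_j f(p_j) = \lambda$ for $j \neq i$. Implicit differentiation gives
\[
\frac{d\lambda}{dp_i} \;=\; -\Bigl(\sum_{j\neq i}\frac{1}{s_j f'(p_j)}\Bigr)^{-1},
\]
which is strictly positive since $f' < 0$. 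Setting $d\Pi_{P,i}/dp_i = 0$ and rearranging yields
\[
\frac{1}{s_i f'(p_i)} \;+\; \frac{p_i}{\lambda-b'(p_i)} \;=\; T, \qquad T \;:=\; \sum_{j=1}^K \frac{1}{s_j f'(p_j)},
\]
where the right-hand side $T$ is the same for every proposer. The FOC also forces $b'(p_i) > \lambda$, so every $p_i$ lies in $(0, p^\ast)$ with $p^\ast := (b')^{-1}(\lambda)$.

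For step (iv), using $s_i = \lambda/f(p_i)$ the left-hand side depends only on $p_i$ and $\lambda$; denote it $\psi(p_i; \lambda)$. Writing $\psi$ over a common denominator and applying the identity $b(p)-pb'(p) = p^2(-f'(p))$ together with $b'(p) = L(1-p)^{L-1}$ recasts $\psi$ in terms of the ratios whose monotonicity and sign are the content of Lemmas~\ref{lemma:minus_gi_decreasing} and~\ref{lemma_ai_decreasing}. Once strict monotonicity of $\psi(\cdot;\lambda)$ on the equilibrium-relevant interval is established, the equation $\psi(p_i;\lambda) = T$ has a unique solution in $p_i$, forcing $p_1 = \dots = p_K$; then $s_i = \lambda/f(p_i)$ gives $s_1 = \dots = s_K$.

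The main obstacle is precisely the monotonicity/uniqueness claim in step (iv): controlling $d\psi/dp$ on $(0, p^\ast)$ requires a careful sign analysis of a rational expression whose numerator is not obviously signed, and this is where the quantitative monotonicity statements of Lemmas~\ref{lemma:minus_gi_decreasing} and~\ref{lemma_ai_decreasing}—especially the decreasing character of $-La/f'$ on $(0,1]$—do the technical work.
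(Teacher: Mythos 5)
Your steps (i)--(iii) are sound and land exactly on the paper's first-order condition: your identity $\frac{1}{s_if'(p_i)}+\frac{p_i}{\lambda-b'(p_i)}=T$ with $T=\sum_{j}\frac{1}{s_jf'(p_j)}$ is equation~\eqref{eq:system_deriv_PP} rearranged, with $\lambda=C$ and $T=H$. (A small omission in step (i): if $s_K=1$, deviating to $s_i'=s_K$ yields zero payoff, so the deviation must be to an offer slightly below one.)

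The genuine gap is step (iv), and it is not merely an unfinished computation: the strict monotonicity of $\psi(\cdot;\lambda)$ on $(0,p^*)$ that your argument needs appears to be false in general. Writing $a(p)=\frac{Lp^2(1-p)^{L-1}}{1-(1-p)^L-Lp(1-p)^{L-1}}$, one has $\lambda\,\psi(p;\lambda)=-p-a(p)+\frac{\lambda p}{\lambda-b'(p)}$, a sum of two decreasing terms and one increasing term (since $a$ is decreasing by Lemma~\ref{lemma_ai_decreasing}). For $L=2$ this equals $p-2+\frac{\lambda p}{\lambda-2+2p}$, with derivative $1-\frac{\lambda(2-\lambda)}{(2-\lambda-2p)^2}$, which is positive at $p=0$ whenever $\lambda<1$ but tends to $-\infty$ as $p\to p^*$; hence $\psi$ rises and then falls, and a level set $\psi=T$ can contain two distinct points of $(0,p^*)$. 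The theorem is still true because $T$ is not a free constant: it contains the terms $\frac{1}{s_1f'(p_1)}$ and $\frac{1}{s_2f'(p_2)}$ themselves, so two distinct solutions would have to satisfy a coupled pair of equations. The paper keeps exactly this coupling: it solves the FOC as $C(p_i)=\frac{(1-p_i+\sum_{j\ne i}a_j)L(1-p_i)^{L-1}}{1+\sum_{j\ne i}a_j}$, observes that the sums $\sum_{j\ne 1}a_j$ and $\sum_{j\ne 2}a_j$ differ precisely by swapping $a_1$ and $a_2$, and derives a contradiction from $C(p_1)=C(p_2)$ with $p_1<p_2$ by a term-by-term comparison whose key cross inequality $a_2(1-p_1)^{L-1}\ge a_1(1-p_2)^{L-1}$ is supplied by Lemma~\ref{lemma:minus_gi_decreasing}. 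Your reduction to injectivity of a single-variable function discards this cross-term information; you should either restore it and compare the two coupled equations directly, as the paper does, or find a different uniqueness mechanism --- monotonicity of $\psi$ alone will not close the proof.
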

\begin{proof}
It is clear that if an offer leads to proposer's overall selection probability zero this can not constitute a Nash equilibrium, since by raising the offer to the highest offer (or in the case the highest offer is one then a sufficiently smaller offer than one) the proposer can reach a non-zero payoff. Thus, for Nash equilibria candidates we will only consider those sets of offers $\{s_1,s_2,\dots, s_K\}$ for which the evolutionarily stable strategy yields $p_i>0$ for all proposers $i\in\{1,2,\dots,K\}.$  If this is the case then also a feasible solution to the homogeneous system in~\eqref{app:system_s} exists. 

Let us remind that selection probabilities of proposers, $p_i(s_1,s_2,\dots,s_K)$ for $i \in \{1,2,\dots, K\},$ are functions of the offer set.
The payoff of the proposer $i$ is
\begin{equation}
    \begin{aligned}
        \Pi_{P,i}= (1-s_i)\left(1-(1-p_i)^L\right),
    \end{aligned}\label{eq:payoff_ith_proposer}
\end{equation}
and due to $\frac{\partial \Pi_R}{\partial p_i}=0$ we have that
\begin{equation}\label{eq:p_vs_C}
  \begin{aligned}
     \frac{s_i}{p_i}\left(1-(1-p_i)^L\right)=\frac{s_K}{p_K}\left(1-(1-p_K)^L\right)=:C(s_1,s_2,\dots s_K)\ .
         \end{aligned}
  \end{equation}
Thus, we can then rewrite the payoff in~\eqref{eq:payoff_ith_proposer} as
\[\Pi_{P,i}=1-(1-p_i)^L-Cp_i\ .\]
In Nash equilibrium we need the derivative of the proposer's payoff with respect to the strategy $s_i$ to be zero:  
\begin{equation}\label{dpayoffds}
    \begin{aligned}
        \frac{\partial \Pi_{P,i}}{\partial s_i}=L(1-p_i)^{L-1}\frac{\partial p_i}{\partial s_i}-C\frac{\partial p_i}{\partial s_i}-\frac{\partial C}{\partial s_i}p_i\ .
    \end{aligned}
\end{equation}
We denote $f(p_i)=\frac{1-(1-p_i)^L}{p_i}$ and have that $f(p_i)=\frac{C}{s_i},$ thanks to the inverse function theorem:
\begin{equation}\label{eq:dpds}
\begin{aligned}
\frac{\partial p_i}{\partial s_i}&=\frac{\partial f^{-1}}{\partial  \frac{C}{s_i}}\bigg|_{\frac{C}{s_i}}\frac{\partial\frac{C}{s_i}}{\partial s_i}=\frac{g_i}{s_i}\frac{\partial C}{\partial s_i}-g_i\frac{C}{s_i^2},\\
 \frac{\partial p_j}{\partial s_i} &=\frac{\partial f^{-1}}{\partial  \frac{C}{s_j}}\bigg|_{\frac{C}{s_i}}\frac{\partial\frac{C}{s_j}}{\partial s_i}=\frac{g_j}{s_j}\frac{\partial C}{\partial s_i} \quad  \text{for } i\neq j,
 \end{aligned}
\end{equation}
where $g_i=\frac{1}{f'(p_i)}=\frac{p_i^2}{(1-p_i)^{L-1}(Lp_i+1-p_i)-1}.$
Since we know that 
\begin{equation}\label{eq:dcds}
\begin{aligned}
    \sum_{j=1}^K \frac{\partial p_j}{\partial s_i}=0 \quad  \text{we have that} \quad 
\frac{\partial C}{\partial s_i} = \frac{1}{H}\frac{g_iC}{s_i^2}\ ,
\end{aligned}
\end{equation}

where $H=\sum_{j=1}^K \frac{g_j}{s_j}.$
By submitting~\eqref{eq:dpds} and~\eqref{eq:dcds} into~\eqref{dpayoffds} we get
\begin{equation}\label{eq:dpayoff_ds}
\begin{aligned}
\frac{\partial \Pi_{P,i}}{\partial s_i}=\frac{\partial p_i}{\partial s_i}\left(L(1-p_i)^{L-1}-C \right)-\frac{\partial C}{\partial s_i}p_i=\left(\frac{g_i^2C}{s_i^3H}-\frac{g_iC}{s_i^2}\right)\left(L(1-p_i)^{L-1}-C \right)-\frac{p_ig_iC}{s_i^2H}\ .
\end{aligned}
\end{equation}
In the subgame-perfect Nash equilibrium~\eqref{eq:dpayoff_ds} has to be equal to zero for all $i \in \{1,2,\dots,K\}.$ Thus, we can simplify set of equations in~\eqref{eq:dpayoff_ds} to
\begin{equation}\label{eq:system_deriv_PP}
\begin{aligned}
d_i:=\left(\frac{g_i}{s_i}-H\right)\left(L(1-p_i)^{L-1}-C \right)-p_i = 0\ , \text{  for all $i \in \{1,2,\dots,K\}\ .$}
\end{aligned}
\end{equation}

Next, we will show that if $p_i<p_j$ for some $i,j$ then $d_i>d_j$ which proves that other than the symmetric solution of a system of equations~\eqref{eq:system_deriv_PP} does not exist.
First we rewrite $\frac{g_i}{s_i}$ by using~\eqref{eq:p_vs_C} as
\[\frac{g_i}{s_i}=\frac{g_i  (1-(1-p_i)^L)}{Cp_i}\ ,\]
which can be further rewritten as
\begin{equation*}
    \begin{aligned}
        \frac{g_i}{s_i}&
        &=\frac{1}{C}\left(-p_i-\frac{Lp_i^2(1-p_i)^{L-1}}{1-(1-p_i)^{L}-Lp_i(1-p_i)^{L-1}}\right)\ ,
    \end{aligned}
\end{equation*}
from which we can derive that
\[ H=\sum_{i=1}^K \frac{g_i}{s_i} = \frac{1}{C}\left(-1 -\sum_{i=1}^K a_i \right),\]
where $a_i=\frac{Lp_i^2(1-p_i)^{L-1}}{1-(1-p_i)^{L}-Lp_i(1-p_i)^{L-1}}.$
Then we can rewrite $d_i$ (see~\eqref{eq:system_deriv_PP}) as
\begin{equation*}
    \begin{aligned}
       \left(1-p_i+\sum_{j=1,j\neq i}^K a_j\right)\left(L(1-p_i)^{L-1}-C \right)-Cp_i =0\ ,\\
    \end{aligned}
\end{equation*}
and derive
\[C(p_i)=\frac{(1-p_i+\sum_{j=1,j\neq i}^K a_j )L(1-p_i)^{L-1}}{1+\sum_{j=1,j\neq i}^K a_j }\ .\]
Now, we proceed to prove the statement by contradiction. Let us assume without loss of generality that $0<p_1<p_2< 1.$ Since $C(p_1)=C(p_2),$ we have
\begin{equation*}
    \begin{aligned}
          (1+A+a_1)(1-p_1+A+a_2)(1-p_1)^{L-1}&=(1+A+a_2)(1-p_2+A+a_1)(1-p_2)^{L-1},
    \end{aligned}
\end{equation*}
   where $A=\sum_{j=3}^K a_j,$ which means it should also be true that
\begin{equation*}
    \begin{aligned}
           (1+A+a_1)\left[(1-p_1+A)(1-p_1)^{L-1}+a_2(1-p_1)^{L-1}\right]=
           (1+A+a_2)\left[(1-p_2+A)(1-p_2)^{L-1}+a_1(1-p_2)^{L-1}\right],
    \end{aligned}
\end{equation*}
but as we show in the next steps, this is not possible. Since $p_1<p_2$ and function $a_i$ is decreasing and positive on $[0,1)$ (see Lemma~\ref{lemma_ai_decreasing}) we must have $a_1>a_2.$
That means that 
\[ (1+A+a_1)> (1+A+a_2) \quad  \text{ and } \quad (1-p_1+A)(1-p_1)^{L-1}> (1-p_2+A)(1-p_2)^{L-1}\ .\]
But as we will show also 
\[a_2(1-p_1)^{L-1}\geq a_1(1-p_2)^{L-1}\ ,\]
must hold. After submitting for $a_i,$ we have
\begin{equation}\label{eq:func_proof}
    \begin{aligned}
     \frac{Lp_2^2(1-p_2)^{L-1}}{1-(1-p_2)^{L}-Lp_2(1-p_2)^{L-1}}(1-p_1)^{L-1} &\geq   \frac{Lp_1^2(1-p_1)^{L-1}}{1-(1-p_1)^{L}-Lp_1(1-p_1)^{L-1}}(1-p_2)^{L-1}\\
     \frac{p_2^2}{1-(1-p_2)^{L}-Lp_2(1-p_2)^{L-1}}  &\geq  \frac{p_1^2}{1-(1-p_1)^{L}-Lp_1(1-p_1)^{L-1}}\ .\\  
    \end{aligned}
\end{equation}
We know the last line above is true thanks to Lemma~\ref{lemma:minus_gi_decreasing} which states that reciprocal of $\frac{p_2^2}{1-(1-p_2)^{L}-Lp_2(1-p_2)^{L-1}} $ is a positive and non-increasing function on $[0,1]$ and for $L\geq 2,$ meaning the function in the last line of~\eqref{eq:func_proof} is non-decreasing.
Thus, we showed that for any $p_1<p_2$ we can not have $C(p_1)=C(p_2).$  
\end{proof}
We conclude this section with a technical proposition showing that the second derivative of the proposers' payoff is negative in the equilibrium found in the main manuscript (see section named "$K$ Proposers and $L$ Responders"), thus proving, that it is indeed a maximum.
\begin{prop}\label{prop:second_der}
	Consider MPMR UG with $L\geq 2$ responders and $K\geq 2$ proposers. The second derivative of the proposer's payoff is negative at the equilibrium $s^*$ (see main manuscript "$K$ Proposers and $L$ Responders", subsection named "Proposers' Strategy").
\end{prop}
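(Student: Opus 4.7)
Plan: I would reparametrize proposer $i$'s deviation by their own selection probability $p_i$ rather than by the offer $s_i$. This exploits the symmetry of the equilibrium and turns the second-derivative test into a clean one-variable problem.

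When only $s_i$ varies from the symmetric profile (all $s_j = s^*$ for $j\ne i$), the responders' ESS conditions $s_j f(p_j) = s_k f(p_k)$ in \eqref{app:eq_eq_rules} force every $p_j$ with $j\ne i$ to coincide (using the strict monotonicity of $f$ from Lemma \ref{lemma:fun_decreasing}); combined with $\sum_j p_j = 1$ this gives $p_j = (1-p_i)/(K-1)$ for $j\ne i$, and hence the explicit relation
\begin{equation*}
s_i = s^*\,h(p_i),\qquad h(p) := \frac{f\!\big((1-p)/(K-1)\big)}{f(p)}\ .
\end{equation*}
Because $\partial p_i/\partial s_i > 0$ was already established in the proof of Theorem \ref{thm:unique_prop_nash} (via the sign of $g_i$), the chain rule gives $\Pi_{P,i}''(s^*) = \phi''(1/K)\,(\partial p_i/\partial s_i)^2 + \phi'(1/K)\,\partial^2 p_i/\partial s_i^2$, and the first-order condition $\phi'(1/K)=0$ reduces the proposition to showing $\phi''(1/K) < 0$, where $\phi(p) := [1 - s^* h(p)][1 - (1-p)^L]$.

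Writing $\phi = uv$ with $u(p) = 1-(1-p)^L$ and $v(p) = 1-s^* h(p)$, Leibniz gives $\phi'' = u''v + 2u'v' + uv''$. At $p=1/K$ the first term is strictly negative, since $u''(1/K) = -L(L-1)((K-1)/K)^{L-2} < 0$ and $v(1/K) = 1 - s^* > 0$. For the second term, logarithmic differentiation of $h$ yields
\begin{equation*}
h'(1/K) = -\frac{K}{K-1}\cdot\frac{f'(1/K)}{f(1/K)}\ ,
\end{equation*}
which is positive by Lemma \ref{lemma:fun_decreasing} ($f$ is strictly decreasing on $[0,1]$). Hence $v'(1/K)<0$ and $2u'v'(1/K) < 0$.

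The crux---and the main obstacle---is the third term $uv''(1/K) = -u(1/K)\,s^* h''(1/K)$. Here I would exploit the factorization $f(p) = \sum_{k=0}^{L-1}(1-p)^k$, from which $f$ is manifestly convex, to obtain a closed form for $h''(1/K)$ depending only on $K$ and $L$. If this quantity is non-negative then $uv''(1/K)\le 0$ and the claim follows immediately. Otherwise, I would substitute the first-order condition (which determines $s^*$ as an explicit function of $K$ and $L$) into $\phi''(1/K)$, reducing the desired negativity to an algebraic inequality in $K$ and $L$ whose verification I expect to invoke the positivity and monotonicity statements of Lemmas \ref{lemma:fun_decreasing}, \ref{lemma:minus_gi_decreasing}, and \ref{lemma_ai_decreasing}.
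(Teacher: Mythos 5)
Your reparametrization by $p_i$ is sound as far as it goes: with $s_j=s^*$ fixed for $j\neq i$, the ESS conditions do force $p_j=(1-p_i)/(K-1)$, the relation $s_i=s^*h(p_i)$ with $h(p)=f((1-p)/(K-1))/f(p)$ is correct, and since $\partial p_i/\partial s_i>0$ the first-order condition kills the $\phi'p_i''$ term, so the sign of $\Pi_{P,i}''(s^*)$ is indeed the sign of $\phi''(1/K)$. Your evaluation of the first two terms of $\phi''=u''v+2u'v'+uv''$ is also correct ($h(1/K)=1$, $h'(1/K)=-\tfrac{K}{K-1}f'(1/K)/f(1/K)>0$). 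But the proof stops exactly where the difficulty lies: the sign of $uv''(1/K)=-u(1/K)\,s^*h''(1/K)$ is never determined, and it does not follow from the convexity of $f=\sum_{k=0}^{L-1}(1-p)^k$ as you suggest. Writing $\psi=\ln f$, a short computation gives
\begin{equation*}
h''(1/K)=\psi''(1/K)\left[\frac{1}{(K-1)^2}-1\right]+\frac{K^2}{(K-1)^2}\,\psi'(1/K)^2\ ,
\end{equation*}
so what you actually need is control of $\psi''=(\ln f)''$, i.e.\ log-concavity (not convexity) of $f$ at $p=1/K$; none of Lemmas~\ref{lemma:fun_decreasing}, \ref{lemma:minus_gi_decreasing}, \ref{lemma_ai_decreasing} addresses this, and for $L\geq 3$ the function $f$ is in fact log-convex near $p=1$, so the sign question is genuinely delicate and needs its own lemma. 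The fallback branch (``substitute the explicit $s^*$ and verify an algebraic inequality in $K$ and $L$'') is likewise only announced, not executed. As written, the argument establishes two of three terms and leaves the decisive one open.

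For comparison, the paper sidesteps this entirely: it writes the first derivative as a strictly positive prefactor $\tfrac{g_iC}{s_i^2H}$ times an expression $f_i$ that vanishes at $s^*$, so the second derivative at $s^*$ reduces to that prefactor times $\partial f_i/\partial s_i$, and each summand of $\partial f_i/\partial s_i$ has a determinate sign from facts already in hand ($g_j<0$, $g_j'\leq 0$, $\partial p_j/\partial s_i<0$ for $j\neq i$, $\partial p_i/\partial s_i>0$, $\partial C/\partial s_i>0$, $L(1-p_i)^{L-1}-C>0$). If you want to complete your route, the missing ingredient is a proof that $\psi''(1/K)\leq 0$ (or a direct bound showing $h''(1/K)\geq 0$); otherwise the term-by-term sign analysis of the paper is the more economical path.
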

\begin{proof}
	Let us remind us that in the equilibrium $p=\frac{1}{K},$ $s^*>0$ and the derivative is
		\begin{equation*}
			\begin{aligned}
				\frac{\partial \Pi_{P,i}}{\partial s_i}
				&=\frac{g_iC}{s_i^2H}\left[\left(\frac{g_i}{s_i}-H\right)\left(L(1-p_i)^{L-1}-C \right)-p_i\right],
			\end{aligned}
		\end{equation*}
		where $H=\sum_{i=1}^K \frac{g_i}{s_i}$ and $g_i=\frac{p_i^2}{(1-p_i)^{L-1}(Lp_i+1-p_i)-1}$ is a function of $p_i$ that is negative and non-increasing (for the proof see Lemma~\ref{lemma:minus_gi_decreasing}). We know that $\frac{g_iC}{s_i^2H}>0$ thus, ${f_i=\left(\frac{g_i}{s_i}-H\right)\left(L(1-p_i)^{L-1}-C \right)-p_i}$ must be equal to zero in equilibrium $s^*.$ Then 
		\begin{equation*}
			\begin{aligned}
				\frac{\partial \Pi_{P,i}}{\partial s_i^2}\bigg|_{s=s^*}= \frac{\partial \frac{g_iC}{s_i^2H}}{\partial s_i}\bigg|_{s=s^*}\cdot 0 + \frac{g_iC}{s_e^2H} \frac{\partial f_i}{\partial s_i}\bigg|_{s=s^*},
			\end{aligned}
		\end{equation*}
		where
		\begin{equation*}
			\begin{aligned}
				\frac{\partial f_i}{\partial s_i}= -\sum_{j\neq i} \frac{g_j'}{s_j}\frac{\partial p_j}{\partial s_i}\left(L(1-p_i)^{L-1}-C \right)- \sum_{j\neq i} \frac{g_j}{s_j}\left( -L(L-1)(1-p_i)^{L-2}\frac{\partial p_i}{\partial s_i}- \frac{\partial C}{\partial s_i}\right)-\frac{\partial p_i}{\partial s_i}\ ,
			\end{aligned}
		\end{equation*}
		where $g_i' = \frac{\partial g_i}{\partial p_i}.$
		Now it is easy to see that $\frac{\partial f_i}{\partial s_i}\bigg|_{s=s^*}<0.$ The first term must be non-positive, since $g_j'$ is non-positive,  $\frac{\partial p_j}{\partial s_i}\bigg|_{s=s^*}< 0$ for $i\neq j$ and $(L(1-p_i)^{L-1}-C)>0$ in the equilibrium. The second and third term must be negative, since $g_j$ is negative and both $\frac{\partial p_i}{\partial s_i}$ and $\frac{\partial C}{\partial s_i}$ are positive in the equilibrium. Lastly, also $\frac{g_iC}{s_i^2H}>0,$ thus $\frac{\partial \Pi_{P,i}}{\partial s_i^2}\bigg|_{s=s^*}<0.$
	\end{proof}
\bibliography{references_SI}